\newcommand{\rrvert}{\vert}
\newcommand{\llvert}{\vert}
\newtheorem{theorem}{Theorem}[section]
\newtheorem{lemma}[theorem]{Lemma}
\newcommand{\E}{\mathbb{E}}
\newcommand{\F}{\mathcal{F}}
\newcommand{\G}{\mathcal{G}}
\def\h{{\mathfrak h}}
\def\m{{\mathfrak m}}
\newcommand{\R}{\mathbb{R}}
\def\L{\mathcal{L}}
\def\Osc{\mathop{\operatorname{Osc}}}
\def\error{\mathcal E}
\def\tr{\mathop{\operatorname{tr}}}
\def\so{{\mathfrak{so}}}
\def\Lip{\operatorname{Lip}}
\begin{document}
\begin{frontmatter}

\title{Random perturbation to the geodesic equation\thanksref{T1}}
\runtitle{Perturbation to geodesic equation}
\thankstext{T1}{Supported by EPSRC Grant EP/E058124/1.}

\begin{aug}
\author[A]{\fnms{Xue-Mei}~\snm{Li}\corref{}\ead[label=e1]{xue-mei.li@warwick.ac.uk}}
\runauthor{X.-M. Li}
\affiliation{University of Warwick}

\address[A]{Mathematics Institute\\
University of Warwick\\
Coventry CV4 7AL\\
United Kingdom\\
\printead{e1}}
\end{aug}

\received{\smonth{8} \syear{2014}}
\revised{\smonth{10} \syear{2014}}

%
\begin{abstract}
We study random ``perturbation'' to the geodesic equation. The geodesic
equation is identified with
a canonical differential equation on the orthonormal frame bundle
driven by a horizontal
vector field of norm $1$.
We prove that the projections of the solutions to the perturbed
equations, converge,
after suitable rescaling, to a Brownian motion scaled by ${8\over
n(n-1)}$\vspace*{1pt} where $n$ is the dimension of the state space.
Their horizontal lifts to the orthonormal frame bundle converge also,
to a scaled horizontal Brownian motion.
\end{abstract}

%
\begin{keyword}[class=AMS]
\kwd{60H10}
\kwd{58J65}
\kwd{37Hxx}
\kwd{53B05}
\end{keyword}
\begin{keyword}
\kwd{Horizontal flows}
\kwd{horizontal Brownian motions}
\kwd{vertical perturbation}
\kwd{stochastic differential equations}
\kwd{homogenisation}
\kwd{geodesics}
\end{keyword}
\end{frontmatter}

\section{Introduction}

Let $M$ be a complete smooth Riemannian manifold of dimension $n$ and
$T_xM$ its tangent space at $x\in M$.
Let $\mathit{OM}$ denote the space of orthonormal frames on $M$ and $\pi$ the
projection that takes an
orthonormal frame $u\dvtx \R^n\to T_xM$ to the point $x$ in $M$. Let
$T_u\pi$ denote its differential at $u$.
For $e\in\R^n$, let $H_u(e)$ be the basic horizontal vector field on
$\mathit{OM}$ such that $T_u\pi(H_u(e))=u(e)$,
that is, $H_u(e)$ is the horizontal lift of the tangent vector $u(e)$
through $u$.
If $\{e_i\}$ is an orthonormal basis of $\R^n$, the second-order
differential operator
$\Delta_H=\sum_{i=1}^nL_{H(e_i)}L_{H(e_i)}$ is the Horizontal Laplacian.
Let $\{w_t^i, 1\le i\le n\}$ be a family of real valued independent
Brownian motions. The solution $(u_t, t<\zeta)$,
to the following semi-elliptic stochastic differential equation (SDE),
$du_t=\sum_{i=1}^n H_{u_t} (e_i)\circ dw_t^i$, is a Markov
process with infinitesimal generator ${1\over2}\Delta_H$ and lifetime
$\zeta$. We denote by
$\circ$ Stratonovich integration. The solutions are known as horizontal
Brownian motions.
It is well known that a horizontal Brownian motion projects to a
Brownian motion on $M$.
We recall that a Brownian motion on $M$ is a sample continuous strong
Markov process with generator
${1\over2}\Delta$ where $\Delta$ is the Laplace--Beltrami operator.
This construction of Brownian motions on a Riemannian
manifold is canonical and has fundamental applications in analysis on
path spaces.

For $e_0\in\R^n$, the horizontal vector field $H(e_0)$ does not
project to a vector field on $M$.
It, however, induces a vector field $X$ on $TM$ which is a geodesic
spray. If $(u_t^{e_0})$ is the solution to the first-order differential
equation
\[
\dot u(t)=H_{u(t)}(e_0),\qquad u(0)=u_0,
\]
then $\pi(u^{e_0}_t) $ is the geodesic on $M$ with initial velocity
$u_0(e_0)$ and initial value $\pi(u_0)$.

Let $N={n(n-1)\over2}$ and let $\so(n)$ be the space of skew
symmetric matrices in dimension $n$. It is the Lie algebra of the
orthogonal group $O(n)$.
For $A\in\so(n)$, we denote by $A^*$ the fundamental vertical vector
field on $\mathit{OM}$ determined by right actions of
the exponentials of $tA$; see (\ref{vv}) below.
If $X$ is a vector field, we denote by $L_X$ Lie differentiation in
the direction of $X$. Let us fix a time $T>0$.
Let $\rho$ be the Riemannian distance function on $M$, $\nabla$
the Levi--Civita connection and $\Delta$ the Laplace--Beltrami operator.
Let $\varepsilon$ a positive number. Our main theorems concern the
convergence, as $\varepsilon$ approaches zero,
of the ``horizontal part'' of the solutions to a family of stochastic
differential equations with parameter $\varepsilon$.
The definitions for the horizontal and vertical vector fields and for
the horizontal lift of a curve are given in Section~\ref{section2}.
Let $e_0 $ be a unit vector in $\R^n$.

\begin{theorem}
\label{thm1}
Let $M$ be a complete Riemannian manifold of dimension \mbox{$n>1$} and of
positive injectivity radius.
Suppose that there are positive numbers $C$ and $a$ such that
$\sup_{\rho(x,y) \le a} |\nabla\, d \rho|(x,y) \le C$.
Let $x_0 \in M$ and $u_0\in\pi^{-1}(x_0)$.
Let $\bar A\in\so(n)$ and $\{A_1,\ldots, A_N\}$ be an orthonormal basis of $\so(n)$.
Let $(u_t^\varepsilon, 0\le t \le T)$ be the solution to the SDE
%
\begin{equation}
\label{ou-1}\cases{ \displaystyle du_t^\varepsilon= H_{u_t^\varepsilon}(e_0)\,dt
+{1\over\sqrt\varepsilon} \sum_{k=1}^NA_k^*
\bigl(u_t^\varepsilon\bigr)\circ dw_t^k+
\bar A^* \bigl(u_t^\varepsilon\bigr) \,dt, \vspace*{2pt}
\cr
u_0^\varepsilon=u_0.}
\end{equation}
Let $x_t^\varepsilon=\pi(u_t^\varepsilon)$ and let $(\tilde x_t^\varepsilon,
0\le t\le T)$ be the horizontal lift of ($x_t^\varepsilon, 0\le t\le T)$
to $\mathit{OM}$ through $u_0$. Then the following statements hold:
\begin{longlist}[(1)]
\item[(1)] The SDE does not explode.
\item[(2)] The processes $(x_{t/\varepsilon}^\varepsilon, 0\le t\le T)$
and $(\tilde x_{t/\varepsilon}^\varepsilon, 0\le t\le T) $ converge in
law, as $\varepsilon\to0$.
\item[(3)] The limiting law of $(x_{t/\varepsilon}^\varepsilon, 0\le
t\le T) $ is independent of $e_0$. It is
a scaled Brownian motion with generator ${4\over n(n-1)}\Delta$.
The limiting law of $(\tilde x_{t/\varepsilon}^\varepsilon, 0\le t\le
T)$ is that associated to the generator ${4\over n(n-1)}\Delta_H $.
\end{longlist}
\end{theorem}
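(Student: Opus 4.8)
The plan is to recognise the fast variable of the system as living on the compact fibre $G \cong SO(n)$ and to apply a homogenisation (averaging) theorem for SDEs with a fast component that is mixing on a compact manifold. Write the solution of (\ref{ou-1}) in the orthonormal frame bundle, and decompose the motion into a "slow" part, carried by the horizontal lift $(\tilde x_t^\epsilon)$ together with the position $x_t^\epsilon = \pi(u_t^\epsilon)$, and a "fast" part $g_t^\epsilon \in G$ describing the rotation of the frame $u_t^\epsilon$ relative to its horizontal lift. On the time scale $t/\epsilon$, the fast variable $g_{t/\epsilon}^\epsilon$ runs a time-changed diffusion generated by $\frac12\sum_k L_{gA_k}L_{gA_k} + L_{g\bar A}$, i.e. a left-invariant hypoelliptic diffusion on $G$; since $\{A_1,\dots,A_N\}$ is a basis of $\g=\so(n)$ this operator is elliptic, hence the diffusion on the compact group $G$ is ergodic with the normalised Haar measure $dg$ as its unique invariant probability and it mixes exponentially fast.

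The key computation is then the identification of the effective generator. On the rescaled time scale the slow motion picks up a drift of order $1/\epsilon$ times the $G$-integral of the horizontal vector field $H_{u}(e_0)$ along the fibre; because $H_{ug}(e_0) = H_u(ge_0)$ and $e_0$ is a unit vector, averaging $ge_0$ over Haar measure on $SO(n)$ gives zero, so the ballistic term vanishes and the limit is a genuine diffusion. The covariance of the limit is obtained from the "integrated autocorrelation" of the horizontal fields, i.e. from solving the Poisson equation $\L_G \beta = H_\cdot(e_0)$ on $G$ and computing $\int_G |H_u(ge_0)|^2\,dg$ type quantities; by the invariance of Haar measure and Schur's lemma (the action of $SO(n)$ on $\R^n$ is irreducible) every such average is a scalar multiple of the identity on each tangent space. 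Tracking the constant through the horizontal Laplacian $\Delta_H = \sum_i L_{H_i}L_{H_i}$ and using $\sum_i |e_i \cdot (ge_0)|^2 = 1$ together with the normalisation of Haar measure produces exactly the factor $\frac{4}{n(n-1)} = \frac{2}{N}$, and projecting by $\pi$ turns $\Delta_H$ into $\Delta$. This yields part (2); part (1), tightness and convergence in law, follows from the martingale problem formulation once the generator is identified and the conservativeness (no explosion) is in hand.

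The conservativeness itself is where the geometric hypotheses enter: the bound $\sup_{\rho(x,y)\le a}|\nabla d\rho|(x,y)\le C$ together with positive injectivity radius gives, via a standard comparison/It\^o-formula argument applied to $\rho(x_0, x_t^\epsilon)$, a control on the radial process that rules out explosion uniformly in $\epsilon$; this is what lets us run the SDE on the fixed interval $[0,T]$ and keeps the family $(x_{t/\epsilon}^\epsilon)$ tight. I would also need a uniform moment estimate on the horizontal lift, which again comes from the compactness of the fibre and the distance bound.

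The main obstacle I expect is not the algebra of the effective constant but making the homogenisation rigorous in the noncompact, unbounded-coefficient setting: the horizontal vector field $H_u(e_0)$ and its derivatives are not bounded on $OM$, so the standard averaging theorems do not apply off the shelf. The fix is to localise — stop the process on exit from geodesic balls, prove convergence there, and then use the conservativeness estimate to remove the localisation — while carefully checking that the corrector (the solution of the Poisson equation on the fibre) and the error terms it generates remain controlled uniformly. Handling the interplay between the stochastic parallel transport defining $(\tilde x_t^\epsilon)$ and the genuine trajectory $(u_t^\epsilon)$, and showing both converge to the advertised limits simultaneously, is the technical heart of the argument.
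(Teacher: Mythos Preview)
Your high-level strategy---split into a slow horizontal lift $\tilde x^\epsilon_t$ and a fast fibre variable $g^\epsilon_t\in G$, solve the Poisson equation on $G$ for the drift, and identify the limit via the martingale problem---is exactly the paper's. The computation of the constant via irreducibility of the $SO(n)$-action is also in line with what the paper does (there it is written out as explicit Haar integrals $\int_G\langle ge_0,e_i\rangle\langle ge_0,e_j\rangle\,dg=\tfrac1n\delta_{ij}$, with the corrector $h_i(g)=-\tfrac{4}{n-1}\langle ge_0,e_i\rangle$).

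There is, however, a genuine misattribution of where the geometric hypotheses are used, and this hides the one nontrivial step. Conservativeness does \emph{not} require the bound on $\nabla d\rho$ or positive injectivity radius: once you write $u^\epsilon_t=\tilde x^\epsilon_t\,g^\epsilon_t$, the fibre process $g^\epsilon_t$ lives on the compact group $G$, and $\tilde x^\epsilon_t$ solves the ODE $\dot{\tilde x}^\epsilon_t=H_{\tilde x^\epsilon_t}(g^\epsilon_t e_0)$ whose right-hand side has unit length; geodesic completeness alone gives global existence. In particular $|H_u(e_0)|=1$ everywhere, so your worry about ``unbounded coefficients'' and the localisation scheme you propose are unnecessary.

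What the hypotheses \emph{are} needed for is tightness of $(\tilde x^\epsilon_{t/\epsilon})$, and this is not a ``standard radial comparison'': a priori $\rho(x_0,x^\epsilon_{t/\epsilon})$ could be of order $t/\epsilon$, since the drift is order one and the time is $t/\epsilon$. The cancellation that brings this down to $O(\sqrt t)$ comes from applying the corrector trick a second time---not to identify the limiting generator, but to a test function $F$ built from the (suitably truncated) Riemannian distance. One writes $F(\tilde x^\epsilon_{t/\epsilon})-F(\tilde x^\epsilon_{r/\epsilon})$ via It\^o's formula, replaces the order-$1/\epsilon$ integral $\int DF(H(g^\epsilon_se_0))\,ds$ using $\L_G h_i=\langle ge_0,e_i\rangle$, and is left with boundary terms of order $\epsilon$, a Lebesgue integral involving $\nabla dF$, and a martingale of the right scale. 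The bound $\sup_{\rho\le a}|\nabla d\rho|\le C$ and the positive injectivity radius are exactly what make $|\nabla dF|$ uniformly bounded when $F=\phi^2\circ\tilde\rho(\cdot,u_0)$ with $\phi$ a smooth cutoff; this yields the moment bound $\E\,\tilde d(\tilde x^\epsilon_{t/\epsilon},\tilde x^\epsilon_{r/\epsilon})^p\le C(T)|t-r|^{p/2}$ and hence tightness and equi-H\"older continuity. Without this step the martingale-problem argument has no compactness to work with, so you should make the tightness proof (with the corrector applied to the distance) explicit rather than fold it into conservativeness.
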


If ``$\varepsilon=\infty$'' and $\bar A=0$, the SDE (\ref{ou-1}) reduces to
the first-order differential equation $\dot u(t)=H_{u(t)}(e_0)$ whose solutions
are geodesics. If ``$\varepsilon=0$'', the SDE ``reduces'' to the ``vertical SDE'',
$du_t^\varepsilon= {1\over\sqrt\varepsilon} \sum_{k=1}^NA_k^*(u_t^\varepsilon
)\circ dw_t^k$. This vertical equation
does not have a meaning for $\varepsilon=0$, nevertheless the ``vertical SDE''
has a first integral $\pi\dvtx  \mathit{OM}\to M$, that is, $\pi(u_t^\varepsilon)=\pi
(u_0^\varepsilon)$.
By a preliminary multi-scale analysis,
we see that $\pi(u_t^\varepsilon)$ varies slowly with $\varepsilon$ and there
is a visible effective motion in the time interval $[0, {1\over
\varepsilon}]$.
The first integral $\pi$ is not a real-valued function. It is a
function from a manifold to a manifold and the
slow variables $\{(x_t^\varepsilon), \varepsilon>0\}$ are not Markov
processes. Before further discussions on conservation laws related to
the SDEs,
we remark the following features:
(1) the slow motion solves a first-order differential equation, (2)
the ``fast motion'' on $\mathit{OM}$ is not elliptic, (3)
the limiting process is semi-elliptic. Another feature of Theorem~\ref
{thm1} is that the pair of the intertwined family of stochastic
processes $(x_{t/\varepsilon}^\varepsilon, \tilde x_{t/\varepsilon
}^\varepsilon)$ converge.
We will explore~(3) in a forthcoming article on homogeneous manifolds.
For now, the following observation indicates a potential
application of (3): the stochastic area of
two linear Brownian motions $\{w_t^1, w_t^2\}$ is the principal part of the
horizontal lift of the two-dimensional Brownian motion $(w_t^1, w_t^2)$
to the three-dimensional Heisenberg group. We remark also that the
first-order horizontal geodesic equation on the orthonormal frame
bundle corresponds
a second-order differential equation on the manifold, which explains
the unusual scaling in~(\ref{ou-1}).

There have been many studies of limit theorems whose geometric settings
or scalings or methodologies relate that in this article.
For example, our philosophy agrees with that in Bismut \cite
{Bismut-hypoelliptic-Laplacian} where the equation
$\ddot x={1\over T}(-\dot x+\dot w)$ interpolates between classical
Brownian motion $(T\to0)$ and the geodesic flow ($T\to\infty$).
In Ikeda \cite{Ikeda87} and Ikeda and Ochi \cite{Ikeda-Ochi}, the authors
studied limit theorems for
line integrals of the form $\int_0^t \phi(dx_s)$, where $\phi$ is a
differential form and $(x_s)$ is a suitable process such as a Brownian
motion. In Manabe and Ochi \cite{Manabe-Ochi} the authors obtained central
limit theorems for line integrals along geodesic flows. One of their
tools is symbolic representations of geodesic flows. 
Another related work can be found in Pinsky \cite{Pinsy-parallel},
where a piecewise geodesic with a Poisson-type switching mechanism is
shown to converge to the horizontal Brownian motion.
We also note that geodesic flows perturbed by vertical Brownian motions
were considered by Franchi and Le Jan \cite{Franchi-LeJan},
in the context of relativistic diffusions.

The conclusion of (\ref{thm1}) is consistent with the following central
limit theorems for geodesic flows.
Let $M$ be a manifold of constant negative curvature and of finite volume.
Let $(\gamma_t(x,v))$ denote the geodesic with initial value $(x,v)$ in
the unit tangent bundle $\mathit{STM}$ and let $\theta_t(v)= (\gamma_t(x,v),
\dot\gamma_t(x,v))$, a stochastic process on $\mathit{STM}$.
Let $f$ be a bounded measurable function on $\mathit{STM}$ with the property
that it is centered with respect to the normalized Liouville
measure $\m$. Then there is a number $\sigma$ with the property that
\[
\lim_{t\to\infty} \m \biggl\{\xi\dvtx {\int_0^t f(\theta_s (\xi))
\,ds\over
\sigma\sqrt t} \le a
\biggr\} ={1\over\sqrt{2\pi}}\int_{-\infty}^a
e^{-y^2/2} \,dy.
\]
See Sinai \cite{Sinai60}, Ratner \cite{Ratner73}; see Guivarch and Le Jan
\cite{Guivarch-LeJan} and Enriquez, Franchi and Le Jan \cite
{Enriquez-Franchi-LeJan} for further developments.
See also Helland \cite{Helland} and Kipnis and Varadhan
\cite{Kipnis-Varadhan}. These results
exploit the chaotic nature of the deterministic dynamical system on
manifolds of negative curvature.

In the homogenisation literature, the following works are particularly relevant:
Khasminskii \cite{Khasminskii63,Hasminskii68}, Nelson \cite{Nelson67},
Borodin and Freidlin \cite{Borodin-Freidlin95}, Freidlin and Wentzell~\cite
{Freidlin-Wentzell98}
and Bensoussan, Lions and Papanicolaou \cite{{Bensoussan-Lions-Papanicolaou11}}.
We note in particular Theorem~2.1 in \cite{Borodin-Freidlin95} which
deals with the convergence
of path integrals of a suitable function along a family of ergodic
Markov processes.
In this article, such integrals are better understood as integrals of
differential 1-forms along random paths.
Finally, we mention the following work: Li \cite{Li-averaging} for
averaging of integrable systems
and Ruffino and Gonzales Gargate \cite{Gargate-Ruffino} for averaging on foliated
manifolds.
See also \cite{Li-OM} for an earlier work on the orthonormal frame bundle.
We also refer to Dowell \cite{Dowell} for a scaling limit of
Ornstein--Uhlenbeck type.

\textit{Open question.}
The local uniform bound on $\nabla\, d\rho$ is only used in Lemma~\ref
{lem2} for the proof of tightness.
This bound can be weakened, for example, replaced by a local uniform
control over
the rate of growth of the norms of ${\nabla\, d\rho\over\rho}$ and
${\nabla\,\rho\over\rho}$.
We remark that Brownian motion constructed in Theorem~\ref{thm1} is
automatically complete.
The conditions in Theorem~\ref{thm1} appear to be related to the
uniform cover criterion on stochastic
completeness and could be studied in connection with that in Li \cite
{Li-infinity}.
Also, much of the work in this article is valid for a connection
$\nabla
$ with torsion,
the horizontal tangent bundle and $\Delta_H$
will then be induced by this connection with torsion. The effect of the torsion
will generally lead to an additional drift to the Brownian motion
downstairs. In this case the
geodesic completeness of the manifold $M$
may no longer be equivalent to the metric completeness of $(M, \rho)$.

\section{Preliminaries}
\label{section2}
Given a Riemannian metric on $M$,
an orthonormal frame $u=\{u_1, \ldots, u_n\}$ is an ordered basis of
$T_xM$ that is orthonormal.
We denote by $\mathit{OM}$ the set of all orthonormal frames on $M$ and
$\pi$ the map that takes the frame $u$ to the point $x\in M$.
Let $\pi^{-1}(x)=\{ u\in \mathit{OM}\dvtx  \pi(u)=x \}$.
If $(O, x)$ is a coordinate system on $M$, $u_i=\sum_j u_i^j {\partial
\over\partial x_j} |_{x}$.
This gives a coordinate map on $\mathit{OM}$.\vspace*{-2pt} The map $(x, u_i^j)$ is a
homeomorphism from $\pi^{-1}(O)$ to
$(x(O), O(n))$.
If we identify a frame $u$ with the transformation $u\dvtx \R^n \to T_xM$,
then $\mathit{OM}$ is a principal bundle with fibre $O(n)$ and group $G$, acting
on the right. We adopt the notation $ue=u(e)$.
For $g\in O(n)$ let $R_g$ denote right multiplication on $O(n)$ and the
right action of $O(n)$ on $\mathit{OM}$.
For $A, B\in\so(n)$ let $\langle A, B\rangle =\tr AB^T$.

A tangent vector $v$ in $\mathit{OM}$ is vertical if $T\pi(v)=0$ where $T\pi$
denotes the differential of $\pi$.
If $A$ belongs to the Lie algebra $\so(n)$, we denote by $\exp(tA)$ the
exponential map. If $u$ is a frame, the composition $u\exp(tA)$
is again a frame in the same fibre. We define the fundamental vertical
vector fields associated to $A$ by $A^*$,
%
\begin{equation}
\label{vv} A^*(u)={d\over dt} \bigg|_{t=0} u\exp(tA).
\end{equation}

By a linear connection on the principal bundle $\mathit{OM}$,
we mean a splitting of the tangent bundle $T\mathit{OM}$ with the following properties:
(1) $T_u\mathit{OM}=HT_u\mathit{OM}\oplus VT_u\mathit{OM}$
(2) $(R_a)_*H_uT\mathit{OM}=H_{ua}T\mathit{OM}$ for all $u\in \mathit{OM}$ and $a\in G$.
The spaces $HT_u\mathit{OM}$ and $VT_u\mathit{OM}$ are, respectively, the horizontal
tangent spaces and the vertical
tangent spaces. 
We will introduce a metric on $\mathit{OM}$ such that $\pi$ is an isometry
between $H_uT\mathit{OM}$ and $T_{\pi(u)}M$ and such that
$H_uT\mathit{OM}$ and $VT_u\mathit{OM}$ are orthogonal. The metric on $\so(n)$ is the
bi-invariant metric introduced earlier.
We will restrict our attention to the Levi--Civita connection.

Let $\h_u(v)$ denote the horizontal lift of $v\in T_xM$ through $u\in
\pi^{-1}(x)$.
To each $e\in\R^n$ we denote $H_u(e)=\h_u(u e)$ the basic vector
field. Later, we also use $H_ue$ for $H_u(e)$.
If $\{e_1, \ldots, e_n\}$ is an orthonormal basis of $\R^n$, then $\{
H_u(e_1), \ldots, H_u(e_n)\}$ is an
orthonormal basis for the horizontal tangent space $HT_u\mathit{OM}$.

A piecewise $C^1$ curve $\gamma$ on $\mathit{OM}$ is horizontal if the one-sided
derivatives $\dot\gamma(\pm)$ are
horizontal for all $t$. If $c$ is a $C^1$ curve on $M$, there is a
horizontal curve $\tilde c$ on $\mathit{OM}$
such that $\tilde c$ covers $c$, that is, $\pi(\tilde c(t))=c(t)$. In
fact, $\tilde c(t)$
is the family of orthonormal frames along $c$ that are obtained by
parallel transporting the
frame $\tilde c(0)$. We say that $\tilde c$ is a horizontal lift of
$c$. The map
$\tilde c(t)(\tilde c(0))^{-1}\dvtx T_{c(0)}M\to T_{c(t)}M$ is the
parallel translation along the curve $c(t)$.
In a coordinate chart $(O,x)$, the principal part of $\tilde c(t)$ is a
$n\times n$ matrix whose column vectors $\{\tilde c_1(t), \ldots,
\tilde
c_n(t)\}$ form a frame.
In components, write $\tilde c_l(t)=(\tilde c_l^1(t), \ldots, \tilde
c_l^n(t))^T$. Then
\[
{\partial\tilde c_l^k(t)\over\partial t}+\sum_{i=1, j=1}^n
{\partial
c^i(t)\over\partial t}\Gamma^k_{ij}\bigl(c(t)\bigr) \tilde
c_l^j(t)=0.
\]
Take $c(t)=(0, \ldots, t, \ldots, 0)$, where the nonzero entry is in the
$i$th-place. We obtain the principal part of
the horizontal lift of ${\partial\over\partial x_i}$ through $u=
\tilde c(0)=(u_l^j)$:
\[
\biggl(\h_{\tilde c(0)} \biggl({\partial\over\partial x_i}\biggr)
\biggr)_l= \biggl({\partial\tilde c\over\partial t}(0) \biggr)_l
=- \Biggl(\sum_{j}\Gamma^1_{ij}
u_l^j, \ldots, \sum_{j=1}^n
\Gamma ^n_{ij}u_l^j
\Biggr)^T.
\]
Denote by $A_i$ the matrix whose element at the $(b,l)$ position is
$\sum_{j}\Gamma^b_{ij}u_l^j$. Then $A_i$ is the principal part of
$H_u({\partial\over\partial x_i})$
and the horizontal space at $u$ is spanned by the basis $\{ ({\partial
\over\partial x_i}, A_i)\}$.

A basic object we use in our computation is the connection 1-form
$\varpi$ on $\mathit{OM}$.
A connection 1-form assigns a skew symmetric
matrix to every tangent vector on $\mathit{OM}$ and it satisfies the following
conditions:

(1) $\varpi(A^*)=A$ for all $A\in\so(n)$;

(2) for all $a\in O(n)$ and $w\in \mathit{OM}$, $\varpi( {R_{a}}_*w)=Ad(a^{-1})
\varpi(w)$.
We recall that ${R_a}_*(A^*)=(Ad(a^{-1})A)^*$ for all $a\in O(n)$.
It is convenient to consider horizontal tangent vectors on $\mathit{OM}$ as
elements of the kernel of $\varpi$.
If $\{A_1, \ldots, A_N\}$ is a basis of $\so(n)$, then the horizontal
component of a vector $w$ is $w^h=w-\sum_j \langle \varpi(w),
A_j\rangle A_j^*$.

The connection 1-form $\varpi$ is basically the set of Christoffel symbols.
Let $E=\{E_1, \ldots, E_n\}$ be a local frame; we define the
Christoffel symbols relative to $E$ by $\nabla E_j=\sum_{ki} \Gamma
^k_{ij} \,dx_i \otimes E_k$.
Let $\theta^i$ be the set of dual differential 1-forms on $M$ to $\{
E_i\}$: $\theta^i (E_j)=\delta_{ij}$.
We define $ \omega_k^i=\Gamma^i_{lk} \theta^l$. Then $d\theta
^i=-\sum_k
\omega_k^i\wedge\theta^k $.
Let $\{A_i^j\}$ be a basis of ${\mathfrak g}$. To each moving frame
$E$, we
associate a 1-form, $\omega=\sum_{i,j} \omega_j^iA_i^j$, on $M$.
If $(O,x)$ is a chart of $M$ and $s\dvtx O\to \mathit{OM}$ is a local section of $\mathit{OM}$,
let us denote by $\omega_s$ the differential 1-form given above, then
$\varpi(s_*v)=\omega_s(v)$.
Conditions~(1) and (2) are equivalent to the following: if $a\dvtx U\to G$
is a smooth function,
\[
\varpi\bigl( (s\cdot a)_* v\bigr)=a^{-1}(x)\,da(v)+a^{-1}(x)
\varpi( s_*v)a(x).
\]
This corresponds to the differentiation of $s\cdot a$ and this type of
consideration will be used in the next section.

\section{Some lemmas}

\begin{lemma} Let $M$ be a geodesically complete Riemannian manifold.
Let $(u_t^\varepsilon)$ be the solution to the SDE (\ref{ou-1}) on $\mathit{OM}$.
Let $x_t^\varepsilon=\pi(u_t^\varepsilon)$, which
has a unique horizontal lift, $\tilde x_t^\varepsilon$, through
$u_0\equiv u_0^\varepsilon$. Then
\begin{eqnarray*}
{d\over dt} \tilde x_t^\varepsilon&=&H_{\tilde x_t^\varepsilon}
\bigl(g_t^\varepsilon e_0\bigr),
\\
dg_t^\varepsilon&=& {1\over\sqrt\varepsilon} \sum
_{k=1}^m{g_t^\varepsilon}
A_k \circ dw_t^k+g_t^\varepsilon
\bar A \,dt,
\end{eqnarray*}
where $g_0^\varepsilon$ is the unit matrix.
Consequently the SDE (\ref{ou-1}) is conservative.
\end{lemma}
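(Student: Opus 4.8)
The plan is to exploit the fact that $(\tilde x^\epsilon_t)$ is, by construction, the \emph{horizontal lift} of $x^\epsilon_t = \pi(u^\epsilon_t)$ through $u_0$, so that $u^\epsilon_t$ and $\tilde x^\epsilon_t$ project to the same curve on $M$ and differ only by a time-dependent element of the structure group $G$. Concretely, I would first set $g^\epsilon_t := (\tilde x^\epsilon_t)^{-1} u^\epsilon_t \in G$, interpreting both frames as linear isometries $\R^n \to T_{x^\epsilon_t}M$; this is well defined since $\pi(\tilde x^\epsilon_t) = \pi(u^\epsilon_t)$, and $g^\epsilon_0 = u_0^{-1}u_0 = I$. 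The content of the lemma is then (a) that $g^\epsilon_t$ satisfies the stated linear Stratonovich SDE on $G$, and (b) that $\tilde x^\epsilon_t$ satisfies $\frac{d}{dt}\tilde x^\epsilon_t = H_{\tilde x^\epsilon_t}(g^\epsilon_t e_0)$, together with the resulting conservativeness.

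For (b), the defining property of the horizontal lift is that $\frac{d}{dt}\tilde x^\epsilon_t$ is the horizontal lift to $T_{\tilde x^\epsilon_t}OM$ of $\dot x^\epsilon_t \in T_{x^\epsilon_t}M$, i.e. $\frac{d}{dt}\tilde x^\epsilon_t = \h_{\tilde x^\epsilon_t}(\dot x^\epsilon_t)$. Now $\dot x^\epsilon_t = T\pi(\dot u^\epsilon_t)$, and from \eqref{ou-1} the vertical terms $A_k^*$ and $\bar A^*$ are killed by $T\pi$, so $\dot x^\epsilon_t = T\pi(H_{u^\epsilon_t}(e_0)) = u^\epsilon_t e_0$; using $u^\epsilon_t = \tilde x^\epsilon_t g^\epsilon_t$ this equals $\tilde x^\epsilon_t(g^\epsilon_t e_0)$, and by the definition $H_v(e) = \h_v(ve)$ of the basic vector fields this gives $\frac{d}{dt}\tilde x^\epsilon_t = \h_{\tilde x^\epsilon_t}\big(\tilde x^\epsilon_t(g^\epsilon_t e_0)\big) = H_{\tilde x^\epsilon_t}(g^\epsilon_t e_0)$. (A mild care point: $x^\epsilon_t$ is only a semimartingale, so this identity should be read in the Stratonovich sense and the horizontal-lift theory of Eells--Elworthy / Malliavin / Elworthy invoked to legitimise it; since the generator of $u^\epsilon$ is a sum of smooth vector fields this is standard.)

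For (a), the idea is to write $g^\epsilon_t = (\tilde x^\epsilon_t)^{-1} \circ u^\epsilon_t$ and differentiate (Stratonovich). Equivalently, and more robustly, I would compute $\varpi(\circ du^\epsilon_t)$ along $u^\epsilon$ using the connection 1-form: from \eqref{ou-1}, since $\varpi$ annihilates the horizontal vector $H_{u^\epsilon_t}(e_0)$ and $\varpi(A^*) = A$, we get $\varpi(\circ du^\epsilon_t) = \frac{1}{\sqrt\epsilon}\sum_k A_k \circ dw^k_t + \bar A\, dt$. On the other hand, parallel transport along $x^\epsilon$ is exactly the flow whose frame-bundle lift has vanishing $\varpi$; writing $u^\epsilon_t = \parals_{t}(x^\epsilon) \, g^\epsilon_t$ with $\parals_t$ the stochastic parallel translation and using the transformation rule $\varpi((s\cdot a)_* v) = a^{-1}da(v) + a^{-1}\varpi(s_* v)a$ from \S\ref{section2}D, one gets $\varpi(\circ du^\epsilon_t) = (g^\epsilon_t)^{-1}\circ dg^\epsilon_t$ (the parallel-transport part contributing zero). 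Equating the two expressions yields $(g^\epsilon_t)^{-1}\circ dg^\epsilon_t = \frac{1}{\sqrt\epsilon}\sum_k A_k \circ dw^k_t + \bar A\,dt$, i.e. the stated linear SDE, with $g^\epsilon_0 = I$.

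Conservativeness then follows easily: the SDE for $g^\epsilon$ is a left-invariant linear Stratonovich SDE on the compact group $G$ (being an o.n.b.\ of $\g = \so(n)$, the $A_k$ are fixed matrices), hence has a global solution for all time; given $g^\epsilon$, the equation $\frac{d}{dt}\tilde x^\epsilon_t = H_{\tilde x^\epsilon_t}(g^\epsilon_t e_0)$ is a (random, time-dependent) ODE driven by the basic vector fields, whose integral curves project to geodesics run at unit speed with smoothly varying direction — by geodesic completeness of $M$ these do not explode, so $\tilde x^\epsilon$ (hence $x^\epsilon$, hence $u^\epsilon = \tilde x^\epsilon g^\epsilon$) is defined on all of $[0,T]$. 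The main obstacle is purely a matter of care rather than depth: making the manipulation $\varpi(\circ du^\epsilon_t) = (g^\epsilon_t)^{-1}\circ dg^\epsilon_t$ rigorous for the non-smooth (merely semimartingale) curve $x^\epsilon_t$, i.e.\ justifying that the decomposition $u^\epsilon_t = \parals_t(x^\epsilon)\, g^\epsilon_t$ and the connection-form computation carry over from the piecewise-$C^1$ setting to the stochastic one. This is exactly what the cited stochastic-parallel-transport literature (It\^o, Dynkin, Eells--Elworthy, Malliavin, Elworthy) supplies, so I would simply quote it.
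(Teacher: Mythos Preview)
Your proposal is correct and follows essentially the same route as the paper: define $g_t^\epsilon$ by $u_t^\epsilon=\tilde x_t^\epsilon g_t^\epsilon$, read off $\dot{\tilde x}_t^\epsilon=H_{\tilde x_t^\epsilon}(g_t^\epsilon e_0)$ from the horizontal-lift property, apply the connection form $\varpi$ to the SDE for $u_t^\epsilon$ to obtain $(g_t^\epsilon)^{-1}\circ dg_t^\epsilon$, and deduce conservativeness from compactness of $G$ together with geodesic completeness. One small remark: your caution about $x_t^\epsilon$ being ``only a semimartingale'' is unnecessary here, since the noise in \eqref{ou-1} is purely vertical and is killed by $T\pi$, so $x_t^\epsilon$ is in fact a $C^1$ curve (with random, continuous velocity $u_t^\epsilon e_0$) and the horizontal lift and the $\varpi$-computation can be taken in the classical sense; the paper uses this implicitly when it writes $\frac{d}{dt}\tilde x_t^\epsilon$ as an ODE.
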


\begin{pf} By the defining properties of the basic horizontal vector fields,
$\dot x_t^\varepsilon=\pi_*(H_{u_t^\varepsilon}(e_0))=u_t^\varepsilon e_0$.
Let $\h_u(v)$ denote the horizontal lift of a tangent vector $v$
through $u\in \mathit{OM}$.
Since $u_t^\varepsilon e_0$ has unit speed, the solution exists for all
time if $(u_t^\varepsilon)$ does, and
\[
{d\over dt} \tilde x_t^\varepsilon=
\h_{\tilde x_t^\varepsilon}\bigl(\dot x_t^\varepsilon\bigr)=
\h_{\tilde x_t^\varepsilon}\bigl(u_t^\varepsilon e_0\bigr).
\]
At each time $t$, the horizontal lift $(\tilde x_t^\varepsilon)$ of the
curve $(x_t^\varepsilon)$ through $u_0$
and the original curve $u_t^\varepsilon$ belong to the same fibre. Let
$g_t^\varepsilon$ be an element of $G$ with the property that
$u_t^\varepsilon=\tilde x_t^\varepsilon g_t^\varepsilon$. Then $g_0^\varepsilon$ is
the unit matrix and
\[
{d\over dt} \tilde x_t^\varepsilon=\h_{\tilde x_t^\varepsilon}
\bigl(\tilde x_t^\varepsilon g^\varepsilon_t
e_0\bigr)=H_{\tilde x_t^\varepsilon}\bigl(g_t^\varepsilon
e_0\bigr).
\]
If $a_t$ is a $C^1$ path with values in $O(n)$, $a_t^{-1} \dot a_t=
{d\over dr} |_{r=0}e^{r a_t^{-1}\dot a_t}$,
its action on $u$ gives rise to a fundamental vector field,
\[
{d\over dt} \bigg|_t ua_t={d\over dr}\bigg|_{r=0}
ua_t a_t^{-1}a_{r+t} =
\bigl(a_t^{-1}\dot a_t\bigr)^*(ua_t).
\]
We denote by $\mathit{DL}_g$ and $\mathit{DR}_g$, respectively, the differentials of the
left multiplication and of the right action.
By It\^o's formula applied to the product $\tilde x_t^\varepsilon
g_t^\varepsilon$,
%
\[
du_t^\varepsilon=\mathit{DR}_{g_t^\varepsilon} \circ d\tilde
x_t^\varepsilon+ \bigl(\mathit{DL}_{(g_t^\varepsilon)^{-1}}\circ dg_t^\varepsilon
\bigr)^*\bigl(u_t^\varepsilon\bigr).
\]
Since right translation of horizontal vectors are horizontal, the
connection 1-form vanishes on the first term and
$\varpi(\circ\, du_t^\varepsilon) =\mathit{DL}_{(g_t^\varepsilon)^{-1}}\circ
dg_t^\varepsilon$. We apply $\varpi$ to the SDE for $u_t^\varepsilon$,
\begin{eqnarray*}
dg_t^\varepsilon&=&\mathit{DL}_{g_t^\varepsilon} \varpi\bigl(\circ
\,du_t^\varepsilon\bigr) =\mathit{DL}_{g_t^\varepsilon}\varpi \Biggl(
{1\over\sqrt\varepsilon} \sum_{k=1}^NA_k^*
\bigl(u_t^\varepsilon\bigr)\circ dw_t^k+
\bar A^*\bigl(u_t^\varepsilon\bigr) \,dt \Biggr)
\\
&=& {1\over\sqrt\varepsilon} \sum_{k=1}^m{g_t^\varepsilon}
A_k \circ dw_t^k+g_t^\varepsilon
\bar A \,dt.
\end{eqnarray*}
There is a global solution to the above equation. The ODE $ {d\over dt}
\tilde x_t^\varepsilon=H_{\tilde x_t^\varepsilon} (g_t^\varepsilon e_0)$
has bounded right-hand side and has a global solution. It follows that
$u_t^\varepsilon=\tilde x_t^\varepsilon g_t^\varepsilon$ has a global solution.
\end{pf}

\begin{remark}
Since the stochastic process $(g_t^\varepsilon)$ is sample continuous with
initial value the unit matrix, it stays in the connected component
$\mathit{SO}(n)$ of $O(n)$.
\end{remark}

If $\{A_k\}$ is an orthonormal basis of $\so(n)$ let $\L_G ={1\over2}
\sum_{k=1}^N L_{gA_{k}} L_{gA_{k}}$.
Then $(g_t^\varepsilon)$ is a Markov process with infinitesimal generator
\[
\L^\varepsilon={ 1\over\varepsilon} \L_G+L_{g\bar A}.
\]

\begin{lemma}
\label{lem2}
Let $M$ be a complete Riemannian manifold with positive injectivity
radius. Suppose that there are numbers $C>0$ and
$a_2>0$ such that $\sup_{\rho(x,y) \le a_2} |\nabla\, d \rho|(x,y) \le C$.
Let $T>0$.
The probability distributions of the family of stochastic processes $\{
\tilde x_{t/\varepsilon}^\varepsilon, t\le T\}$ are tight.
There is a metric $\tilde d$ on $M$ such that
$ \{(\tilde x_{t/\varepsilon}^\varepsilon) \}$ is equi-H\"older
continuous with exponent $\alpha<{1\over2}$.
\end{lemma}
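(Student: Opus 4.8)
The plan is to prove tightness by a Kolmogorov-type moment criterion applied not to the intrinsic distance $\rho$ but to a bounded substitute metric $\tilde d$ built from $\rho$; the two assertions of the lemma are then two facets of the same estimate. First I would record the semimartingale decomposition of $\rho(\tilde x_{s}^\epsilon, \tilde x_{t}^\epsilon)$ (or rather of $f(\rho)$ for a suitable bounded concave modification $f$, e.g. $f(r)=r\wedge a_2$ smoothed, or $f(r) = \arctan r$-type truncation) using It\^o's formula along the horizontal lift. The key point is that $\tilde x^\epsilon$ has \emph{bounded horizontal speed}: from Lemma 3.1, $\frac{d}{dt}\tilde x_t^\epsilon = H_{\tilde x_t^\epsilon}(g_t^\epsilon e_0)$ with $|g_t^\epsilon e_0|=1$, so the downstairs process $x_t^\epsilon = \pi(\tilde x_t^\epsilon)$ moves with unit speed — it is a (random, time-changed) curve of bounded variation with respect to $\rho$. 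Consequently $\rho(x_s^\epsilon, x_t^\epsilon) \le |t-s|$ deterministically, which after the time change $t\mapsto t/\epsilon$ becomes $\rho(x_{s/\epsilon}^\epsilon, x_{t/\epsilon}^\epsilon) \le |t-s|/\epsilon$ — the ``wrong'' scaling, exactly the phenomenon flagged in Example 2.5. So the naive bound is useless and the whole content is that \emph{cancellation in the fast oscillation} improves $|t-s|/\epsilon$ to something like $\sqrt{|t-s|}$.

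The mechanism for the cancellation: along $\tilde x^\epsilon_{t/\epsilon}$, the increment in any fixed local coordinate chart is $\int_{s/\epsilon}^{t/\epsilon} H_{\tilde x_r^\epsilon}(g_r^\epsilon e_0)\,dr$, and $g_r^\epsilon$ is the fast $SO(n)$-diffusion with generator $\frac1\epsilon \L_G + L_{g\bar A}$, which is ergodic with respect to Haar measure (since $\{A_k\}$ spans $\g$), and $\int_{SO(n)} g e_0\,dg = 0$. Thus $\int g_r^\epsilon e_0\,dr$ is, after rescaling, a centred additive functional of a fast ergodic Markov process, and the Kipnis--Varadhan / martingale-approximation machinery (already invoked in Example 2.5 for the flat case) gives $\E|\int_{s/\epsilon}^{t/\epsilon} g_r^\epsilon e_0\,dr - (\text{martingale})|$ small and a second-moment bound $\E\big|\int_{s/\epsilon}^{t/\epsilon} g_r^\epsilon e_0\,dr\big|^2 \lesssim \frac{|t-s|}{\epsilon}\cdot\epsilon = |t-s|$ (one factor $\epsilon$ from the decorrelation time of $g^\epsilon$). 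Iterating to get a fourth moment $\E[\,\cdot\,]^4 \lesssim |t-s|^2$ then feeds Kolmogorov's continuity criterion, yielding equi-Hölder continuity of exponent $\alpha<\frac12$ and tightness in $C([0,T];M)$ — but only locally, because all of this is coordinate-dependent.

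To globalize I would use the positive injectivity radius: every point lies in a normal-coordinate chart containing a ball of radius $a$, and the hypothesis $\sup_{\rho(x,y)\le a_2}|\nabla d\rho| \le C$ gives uniform (in the base point) two-sided comparison between $\rho$ and the Euclidean coordinate distance on such balls, with constants independent of the chart — this is precisely the ``uniform cover''/``bounded local coordinates'' philosophy attributed to It\^o and Elworthy in the introduction. Patch the local fourth-moment estimates together by a stopping-time argument: control the exit time of $\tilde x^\epsilon_{t/\epsilon}$ from a ball of radius $a_2$ using the same second-moment bound (so exits on a time-scale $O(1)$, not $O(\epsilon)$), and on each excursion apply the local estimate. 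The metric $\tilde d$ of the statement is then the length metric (or a bounded truncation thereof) for which these uniform comparisons hold — concretely one can take $\tilde d(x,y) = \rho(x,y)\wedge a_2$ or a smooth bounded function of $\rho$; boundedness is what rescues tightness on a possibly non-compact $M$. I expect the main obstacle to be making the martingale-approximation estimate \emph{uniform in $\epsilon$ and in the starting frame} while the slow variable $\tilde x^\epsilon$ drifts: one must show that the ``corrector'' for the fast variable $g^\epsilon$, which a priori depends on the frozen value of $\tilde x^\epsilon$ (through the basic vector fields $H_u$), can be chosen with bounds depending only on the chart geometry — and this is exactly where $|\nabla d\rho|\le C$ and the positive injectivity radius enter, forcing the slow variable to stay in a controlled chart long enough for the fast averaging to take effect.
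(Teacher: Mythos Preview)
Your proposal has the right skeleton --- Poisson-equation corrector for the fast variable, Kolmogorov moment criterion, bounded truncation of the distance --- and would ultimately work, but it is considerably more complicated than the paper's argument, and the ``main obstacle'' you flag is in fact a non-issue.

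The paper avoids local coordinates, chart patching, and stopping times entirely. It works directly with a single global $BC^2$ test function $F=\phi^2\circ\tilde\rho(\cdot,u_0)$ on $OM$, where $\phi$ is a smooth concave cutoff so that $\phi\circ\tilde\rho$ is a bounded metric; the hypothesis $\sup_{\rho\le a_2}|\nabla d\rho|\le C$ is used only to check that this $F$ has bounded second derivative, not for any Euclidean--Riemannian comparison. The martingale-approximation step is then done once, globally, for $F$: writing
\[
(DF)_u\big(H_u(ge_0)\big)=\sum_{i=1}^n \underbrace{(DF)_u(H_u e_i)}_{f_i(u)}\;\underbrace{\langle ge_0,e_i\rangle}_{\alpha_i(g)}
\]
separates the slow and fast variables multiplicatively, so the corrector $h_i$ solving $\L_G h_i=\alpha_i$ depends on $g$ alone and \emph{not} on the frozen slow variable. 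Applying It\^o's formula to each product $f_i(\tilde x^\epsilon)h_i(g^\epsilon)$ and summing gives an explicit decomposition of $F(\tilde x^\epsilon_{t/\epsilon})-F(\tilde x^\epsilon_{r/\epsilon})$ into an $O(\epsilon)$ boundary term, two Riemann integrals whose integrands are bounded by $|DF|_\infty+|\nabla dF|_\infty$, and a martingale with bounded integrand. Burkholder--Davis--Gundy then yields the $p$-th moment bound $C|t-r|^{p/2}$ for $|t-r|\ge\epsilon^2$; for $|t-r|<\epsilon^2$ the trivial Lipschitz bound $|t-r|/\epsilon\le\sqrt{|t-r|}$ already suffices.

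So: your stopping-time/uniform-cover machinery is unnecessary, and your worry that the corrector depends on the slow variable dissolves once you exploit the linearity of $e\mapsto H_u(e)$ to factor the integrand. The paper's route is both shorter and gives all moments at once rather than just the fourth.
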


\begin{pf}
Let $\mu^\varepsilon$ be the probability laws of $(\tilde x_t^\varepsilon)$
on the path space over $\mathit{OM}$ with initial value $u_0$, which we denote
by $C([0,T]; \mathit{OM})$.
Since $\tilde x_0^\varepsilon=u_0$, it suffices to estimate the modulus
of continuity and show that for all positive numbers $a, \eta$, there
exists $\delta>0$ such that for all $\varepsilon$ sufficiently small (see
Billingsley \cite{Billingsley-68} and Ethier and Kurtz \cite{Ethier-Kurtz86})
\[
P\Bigl(\omega\dvtx \sup_{|s-t|<\delta} d\bigl(\tilde
x_t^\varepsilon, \tilde x_s^\varepsilon \bigr)>a
\Bigr)<\delta\eta.
\]
Here, $d$ denotes a distance function on $\mathit{OM}$. We will choose a
suitable distance function.
The Riemannian distance function $\tilde\rho(x,y)$ is not smooth in
$y$ if $y$ is in the cut locus of $x$.
To avoid any assumption on the cut locus
of $\mathit{OM}$, we construct a new distance function that preserves the
topology of $\mathit{OM}$.

Let $2a$ be the minimum of $1$, $a_2$ and the injectivity radius of $M$.
Let $\phi\dvtx  \R_+\to\R_+$ be a smooth concave function such that
$\phi
(r)=r$ when $r<a$ and $\phi(r)=1$ when $r\ge2a$.
Let $\rho$ and $\tilde\rho$ be, respectively, the Riemannian distance
on $M$ and on $\mathit{OM}$. Then $\phi\circ\rho$ and $\tilde d=\phi\circ
\tilde\rho$ are distance functions
on $M$ and on $\mathit{OM}$, respectively. Then for $r<t$,
\[
\phi^2\circ\tilde\rho\bigl(\tilde x_{t/\varepsilon}^\varepsilon,
\tilde x_{r/\varepsilon}^\varepsilon\bigr) =\int_{r/\varepsilon}^{t/\varepsilon}
D \bigl(\phi^2\circ \tilde \rho\bigl(\tilde x_r^\varepsilon,
\cdot\bigr) \bigr)_{ \tilde x_s^\varepsilon} \bigl( H_{\tilde x_s^\varepsilon} \bigl(g_s^\varepsilon
e_0\bigr) \bigr)\,ds.
\]
Since $H_{\tilde x_s^\varepsilon} (g_s^\varepsilon e_0)$ has unit length,
from the equation above we do not
observe, directly, a uniform bound in $\varepsilon$.

For further estimates, we work with a $C^2$ function $F\dvtx \mathit{OM}\to\R$ to
simplify the notation. Also,
the computations below and some of the identities will be used later in
the proof of Theorem~\ref{thm1}.
Let $0\le r<t$,
%
\begin{equation}
\label{3.3-1} F\bigl(\tilde x_{t/\varepsilon}^\varepsilon\bigr)=F\bigl(\tilde
x_{r/\varepsilon
}^\varepsilon\bigr)+\int_{r/\varepsilon}^{t/\varepsilon}(
DF)_{\tilde
x_s^\varepsilon} \bigl(H_{\tilde x_s^\varepsilon}\bigl( g_s^\varepsilon
e_0\bigr) \bigr) \,ds.
\end{equation}

Let $\{e_i\}$ be an orthonormal basis of $\R^n$. We define two sets of
functions $f_i\dvtx \mathit{OM}\to\R$ and $h_i\dvtx O(n)\to\R$:
\[
f_i(u)=( DF)_{u} (H_{u} e_i ),\qquad
\alpha_i(g)=\langle g e_0, e_i\rangle.
\]
From the linearity of $H_u$, we obtain the identity $H_u(ge_0)=\sum_{i=1}^n H_u(e_i)\alpha_i(u)$.
Thus, the integrand in (\ref{3.3-1}) factorizes and we have
%
\begin{equation}
\label{3.3-3} F\bigl(\tilde x_{t/\varepsilon}^\varepsilon\bigr)=F\bigl(\tilde
x_{r/\varepsilon
}^\varepsilon\bigr) +\sum_{i=1}^n
\int_{r/\varepsilon}^{t/\varepsilon} f_i\bigl(\tilde
x_s^\varepsilon\bigr) \alpha_i\bigl(
g_s^\varepsilon e_0\bigr) \,ds.
\end{equation}

Since the Riemannian metric on $G=\mathit{SO}(n)$ is bi-invariant, the Riemannian
volume measure,
which locally has the form $\sqrt{\operatorname{det}(g_{ij})}\,dx^1\wedge\cdots\wedge
\,dx^N$, is the Haar measure.
Let $dg$ be the Haar measure normalized to be a probability measure on $G$.
Let $\tilde g$ be a rotation such that $\tilde ge_0=-e_0$. Then
$\int_{G} g (\tilde ge_0) \,dg= \int_{G} g (e_0) \,dg$.
The integral of $ge_0$ with respect to the Haar measure vanishes.
In particular, $\int_G \alpha_i \,dg=0$. On a compact Riemannian
manifold the Poisson equation with a smooth function
that is centered with respect to the Riemannian volume measure has a
unique centered smooth solution. For each $i$, let $h_i\dvtx G\to\R$
be the smooth centred solution to the Poisson equation
%
\begin{equation}
\label{Poisson} \L_G h_i=\alpha_i= \langle
ge_0, e_i\rangle.
\end{equation}
We apply It\^o's formula to the function $f_ih_i$ and $r<t$,
\begin{eqnarray*}
f_i \bigl(\tilde x_{t/\varepsilon}^\varepsilon\bigr)h_i
\bigl( g_{t/
\varepsilon}^\varepsilon\bigr)& =&f_i\bigl(\tilde
x_{r/\varepsilon}^\varepsilon\bigr) h_i\bigl(g_{r/\varepsilon
}^\varepsilon
\bigr) + \int_{r/\varepsilon}^{t/\varepsilon} (Df_i)_{ \tilde
x_s^\varepsilon
}
\bigl(H_{\tilde x_s^\varepsilon} \bigl( g_s^\varepsilon e_0
\bigr) \bigr) h_i \bigl(g_s^\varepsilon\bigr)\,ds
\\
&&{}+{1\over\sqrt\varepsilon} \sum_k \int
_{r/\varepsilon}^{t/
\varepsilon
} f_i \bigl(\tilde
x_s^\varepsilon\bigr) (Dh_i)_{( g_s^\varepsilon)}
\bigl(g_s^\varepsilon A_k\bigr) \,dw_s^k
\\
&&{}+\int_{r/\varepsilon}^{t/\varepsilon} f_i \bigl(\tilde
x_s^\varepsilon\bigr) L_{g_s^\varepsilon\bar A } h_i\bigl(
g_s^\varepsilon\bigr)\,ds +{1\over\varepsilon} \int
_{r/\varepsilon}^{t/\varepsilon} f_i \bigl(\tilde
x_s^\varepsilon\bigr) \L_G h_i\bigl(
g_s^\varepsilon\bigr)\,ds.
\end{eqnarray*}

We sum up the above equation from $i=1$ to $n$. Note that
\[
\sum_{i=1}^n f_i(u)
\L_G h_i(g)=\sum_{i=1}^n
f_i(u)\alpha_i(g).
\]
We compare the last term in the above formula for $f_i (\tilde
x_{t/\varepsilon}^\varepsilon)h_i( g_{t/\varepsilon}^\varepsilon)$ with
the integral in (\ref{3.3-3}) to obtain that
\begin{eqnarray*}
F\bigl(\tilde x_{t/\varepsilon}^\varepsilon\bigr)&=&F\bigl(\tilde
x_{r/\varepsilon
}^\varepsilon\bigr)+ \varepsilon\sum
_{i=1}^n \bigl(f_i \bigl(\tilde
x_{t/
\varepsilon
}^\varepsilon\bigr)h_i\bigl( g_{t/\varepsilon}^\varepsilon
\bigr)-f_i\bigl(\tilde x_{r/
\varepsilon}^\varepsilon\bigr)
h_i\bigl(g_{r/\varepsilon}^\varepsilon\bigr) \bigr)
\\
&&{}-\varepsilon\sum_{i=1}^n\int
_{r/\varepsilon}^{t/\varepsilon} (Df_i)_{ \tilde x_s^\varepsilon}
\bigl(H_{\tilde x_s^\varepsilon} \bigl( g_s^\varepsilon e_0
\bigr) \bigr) h_i \bigl(g_s^\varepsilon\bigr)\,ds
\\
&&{}-\varepsilon\sum_{i=1}^n\int
_{r/\varepsilon}^{t/\varepsilon} f_i \bigl(\tilde
x_s^\varepsilon\bigr) L_{g_s^\varepsilon\bar A } h_i\bigl(
g_s^\varepsilon\bigr)\,ds
\\
&&{}-\sqrt\varepsilon\sum_{i=1}^n\sum
_{k=1}^N \int_{r/\varepsilon
}^{t/\varepsilon}
f_i \bigl(\tilde x_s^\varepsilon\bigr)
(Dh_i)_{( g_s^\varepsilon
)}\bigl(g_s^\varepsilon
A_k\bigr) \,dw_s^k.
\end{eqnarray*}

Let us compute the differential of $f_i(u)=( DF)_{u}  (H_{u}
e_i )$. Let $\nabla$ be the flat connection on $\mathit{OM}$. It is
determined by the parallelization
$\mathbb X\dvtx \mathit{OM}\times\R^n\times\so(n)\to T\mathit{OM}$ where ${\mathbb X}_u(e,
A) = H_u(e)+\varpi_u^{-1}(A)$. In the calculation below, we use the
fact that $\nabla H(e)=0$.
%
\begin{eqnarray}
\label{generator-1} &&F\bigl(\tilde x_{t/\varepsilon}^\varepsilon\bigr)
-F\bigl(\tilde
x_{r/\varepsilon
}^\varepsilon\bigr)
\nonumber
\\
&&\qquad= \varepsilon\sum_{i=1}^n \bigl( (
DF)_{\tilde x_{t/\varepsilon
}^\varepsilon} (H_{\tilde x_{t/\varepsilon}^\varepsilon} e_i ) h_i\bigl(
g_{t/\varepsilon}^\varepsilon\bigr) -( DF)_{\tilde x_{r/\varepsilon}^\varepsilon}
 (H_{\tilde
x_{r/
\varepsilon}^\varepsilon}
e_i ) h_i\bigl( g_{r/\varepsilon}^\varepsilon \bigr)
\bigr)
\nonumber
\\
&&\quad\qquad{}-\varepsilon\sum_{i=1}^n\int
_{r/\varepsilon}^{t/\varepsilon} (\nabla DF)_{ \tilde x_s^\varepsilon}
\bigl(H_{\tilde x_s^\varepsilon}\bigl( g_s^\varepsilon e_0
\bigr), H_{\tilde x_s^\varepsilon} ( e_i) \bigr) h_i
\bigl(g_s^\varepsilon\bigr)\,ds
\\
&&\qquad\quad{}-\varepsilon\sum_{i=1}^n\int
_{r/\varepsilon}^{t/\varepsilon} ( DF)_{\tilde x_s^\varepsilon} (H_{\tilde x_s^\varepsilon}
e_i ) L_{g_s^\varepsilon\bar A } h_i\bigl( g_s^\varepsilon
\bigr)\,ds
\nonumber
\\
&&\qquad\quad{}-\sqrt\varepsilon\sum_{i=1}^n\sum
_{k=1}^N \int_{r/\varepsilon
}^{t/\varepsilon}
( DF)_{\tilde x_{s}^\varepsilon} (H_{\tilde
x_{s}^\varepsilon} e_i ) (Dh_i)_{( g_s^\varepsilon)}
\bigl(g_s^\varepsilon A_k\bigr) \,dw_s^k.\nonumber
\end{eqnarray}
We also remark that $|H_{\tilde x_s^\varepsilon} e_i|=1 $, $|H_{\tilde
x_s^\varepsilon} g_s^\varepsilon e_i|=1$, $|g_s^\varepsilon\bar A|=|\bar A|$.
If $F$ is a function that is $BC^2$, by the Kunita--Watanabe
inequality, for any $p\ge1$,
\[
\E\bigl\llvert F\bigl(\tilde x_{t/\varepsilon}^\varepsilon\bigr)-F\bigl(\tilde
x_{r/
\varepsilon}^\varepsilon\bigr)\bigr\rrvert ^p \le
C_1(T)\varepsilon^p \bigl(|DF |_\infty+ |\nabla
DF|_\infty \bigr)+ C_1(T) |DF|_\infty
|t-r|^{p/2},
\]
for some constant $ C_1(T)$. If $\varepsilon^2 \le|t-r|$, there exists a
constant $C_2(T)$, such that
$\E\llvert F(\tilde x_{t/\varepsilon}^\varepsilon)-F(\tilde x_{r/
\varepsilon}^\varepsilon)\rrvert ^p \le C_2(T)|t-r|^{p/2}$.
If $|t-r|<\varepsilon^2$, we estimate directly from (\ref{3.3-1}):
\[
\bigl|F\bigl(\tilde x_{t/\varepsilon}^\varepsilon\bigr)-F\bigl(\tilde
x_{r/\varepsilon
}^\varepsilon\bigr)\bigr|\le C {t-r \over\varepsilon}\le C
\sqrt{t-r}.
\]
Thus, for $C(T)=C_2(T)+C^p$,
\[
\E\bigl\llvert F\bigl(\tilde x_{t/\varepsilon}^\varepsilon\bigr)-F\bigl(\tilde
x_{r/
\varepsilon}^\varepsilon\bigr)\bigr\rrvert ^p \le
C(T)|t-r|^{p/2}.
\]
We apply the above formula to $F=\phi^2 \circ\tilde\rho(\cdot, u_0)$
where $u_0=\tilde x_0^\varepsilon$.
Since $\phi$ is bounded so is $F$. Since $|\nabla\tilde\rho(\cdot,
u_0)|\le1$ and $\phi'$ is bounded,
$\nabla F=2\phi\phi' \nabla\rho(\cdot, u_0)$ is bounded. The norm of
its second derivative is
\[
\bigl| 2 \bigl(\phi'\bigr)^2 \nabla\rho\otimes\nabla\rho+2
\bigl( \phi\phi''\bigr) \nabla \rho \otimes\nabla
\rho+2\bigl(\phi\phi'\bigr)\nabla\, d\rho\bigr|,
\]
and the tensor is evaluated at $\rho(x,y)$. We remark that $\phi
'(x,y)=0$ when $\rho(x,y) \ge a$ and $|\nabla\, d\rho(\rho(x,y))|\le
C$ when
$\rho(x,y) \ge a$. Hence, for all $u_0$, there is a common number
$C(T)$ s.t.
\[
\E\bigl\llvert \tilde d\bigl(\tilde x_{t/\varepsilon}^\varepsilon,
u_0\bigr)\bigr\rrvert ^p \le C(T)t^{p/2}.
\]
Conditioning on $\F_r$ to see that
\[
\E\bigl\llvert \tilde d\bigl(\tilde x_{t/\varepsilon}^\varepsilon, \tilde
x_{r/
\varepsilon}^\varepsilon\bigr)\bigr\rrvert ^p \le
C(T)|t-r|^{p/2}.
\]
The tightness of the law of $\{\tilde x_{t/\varepsilon}^\varepsilon\}$
follows. By Kolmogorov's
criterion, $\{\tilde x_{t/\varepsilon}^\varepsilon\}$ is H\"older
continuous with exponent $\alpha$ for any
$\alpha<{1\over2}$. The H\"older constants are independent of
$\varepsilon$ and, for any $p'<p$,
Kolmogorov's criterion yields
%
\begin{equation}
\sup_\varepsilon\E\sup_{s\neq t} \biggl(
{\tilde d(\tilde x_{t/
\varepsilon}^\varepsilon, \tilde x_{s/\varepsilon}^\varepsilon) \over
|t-s|^\alpha} \biggr)^{p'}<\infty,
\end{equation}
thus completing the proof.
\end{pf}

We will need the following lemma in which we make a statement on the
limit of a function
of two variables, one of which is ergodic and the other one varies
significantly slower.
The result is straightforward, but we include the proof for completeness.
If $f\dvtx N\to\R$ is a Lipschitz continuous function on a metric space
$(N, d)$ with distance function $d$, we denote
by $|f|_{\Lip}$ its Lipschitz semi-norm. If $S$ is a subset of $N$, we
let $\Osc_S(f)$ denote
$|\sup_{x\in S} f(x)-\inf_{x\in S} f(x)|$, the Oscillation of $f$ over
$S$. Let $\Osc(f)=\Osc_N(f)$.

Let $E(N)$ be one of the following classes of real valued functions on
a metric space $(N,d)$:
\[
E(N)=\bigl\{ f\dvtx N\to\R\dvtx |f|_{\Lip}<\infty, \Osc(f) <\infty\bigr
\}
\]
or $E_r(N)=E(N)\cap C^r$, where $r=0,1, \ldots,\infty$. Denote
\[
|f|_{E}=|f|_{\Lip}+\Osc(f).
\]
Let $d$ be the metric with respect to which the Lipschitz property is
defined. We define $\tilde d=d\wedge1$
to be a new metric on $N$. Then $|f|_{\Lip} \le C$ and $\Osc(f)\le C$
is equivalent to $f$ being Lipschitz with respect
to $\tilde d$.

Let $p\ge1$ and let $W_p(N)$ denote the Wasserstein $p$-distance
between two probability measures on a metric space
$(N, d)$:
\[
\bigl(W_p(\mu_1, \mu_2)
\bigr)^{p}=\inf_{\{\nu\dvtx  (\pi_1)_*\nu
=\mu_1, (\pi
_2)_*\nu=\mu_2\}}\int_{N\times N}
\bigl(d(x,y)\bigr) ^p \,d\nu(x,y).
\]
Let $\mu^\varepsilon, \mu$ be a family of probability measures on the
metric space $(N,d)$. 
Then $\mu^\varepsilon\to\mu$ in $W_p(N)$ if and only if they converge
weakly and\break  $\sup_{x\in N} \int(d(x, y))^p\,d\mu_\varepsilon(y)$
is bounded for any $x\in N$. If $\tilde d=d\wedge1$, then $\tilde d$
and $d$ induce the same topology on $N$ and
the concepts of weak convergence are equivalent. With respect to
$\tilde d$, weak convergence is equivalent to
Wasserstein $p$-convergence.

Let $(\Omega, \F, (\F_t),P)$ be a filtered probability space. Let
$(Y,\rho), (Z, d)$ be metric spaces or $C^m$ manifolds.
Let $\{(y_t^\varepsilon, t\le T), \varepsilon>0\}$ be a family of $\F
_t$-adapted stochastic processes with state space $Y$.
Let $ (z_t^\varepsilon)$ be a family of sample continuous $\F_t$-Markov
processes on $Z$.

\begin{assumption}\label{assum}
(1) The stochastic processes $(y_{t/\varepsilon} ^\varepsilon, t\le T)$ are
equi-uniformly continuous and converge weakly to a continuous process
$(\bar y_t, t\le T)$.\vspace*{-6pt}
\begin{longlist}[(2)]
\item[(2)]
For each $\varepsilon$, $(z_{t\varepsilon}^\varepsilon, t\le T)$ has an
invariant measure $\mu_\varepsilon$. There exists a function $\delta$ on
$\R_+\times Z\times\R_+$ with the property that $\delta(\cdot, z,
\varepsilon)$ is nondecreasing for each pair of $(z, \varepsilon)$ and
$\lim_{\varepsilon\to0}\sup_{z\in Z} \delta(K, z, \varepsilon)=0$ for all $K$
and for all $f\in E_r(Z)$ and $t>0$,
\[
\E\biggl\llvert {\varepsilon\over t }\int_{0}^{t/\varepsilon}
f\bigl(z_{s\varepsilon
}^\varepsilon\bigr)\,ds-\int_Z f
(z) \,d\mu_\varepsilon(z) \biggr\rrvert \le\delta\biggl(|f|_E,
z_0^\varepsilon,{\varepsilon\over t}\biggr).
\]
\item[(3)] There exists a probability measure $ \mu$ on $W^1(
C([0,T];Z)) $ s.t.\break $\lim_{\varepsilon\to0} W_1( \mu_\varepsilon, \mu)=0$.
\item[(4)] The processes $(y^\varepsilon_{t/\varepsilon})$ converges to
$(\bar y_t)$
in $W_1(Y)$, and there exists an exponent $\alpha>0$ such that
\[
\sup_\varepsilon\E \biggl(\sup_{s\neq t}
{\rho(y_{t/\varepsilon
}^\varepsilon, y_{s/\varepsilon}^\varepsilon) \over|t-s|^\alpha} \biggr)<\infty.
\]
\end{longlist}
\end{assumption}

We cannot assume that $(\bar y_t)$ is adapted to the filtration
with respect to which $(z^\varepsilon_{t/\varepsilon})$ is a Markov
process. The process
$(z_{t/\varepsilon}^\varepsilon)$ is usually not convergent and we do
not assume that $( y_t^\varepsilon, z_t^\varepsilon)$ and $(\bar y_t)$ are
realized in the same
probability space.

We denote by $\hat P_\eta$ the probability distribution of a random
variable $\eta$ and let $T$ be a positive real number.
If $r$ is a positive number, let $C([0,r];Y)$ denote the space of
continuous paths, $\sigma\dvtx  [0,r]\to Y$, on $Y$. If $F\dvtx C([0,r];Y)\to
\R$
is a Borel measurable function,
we use the shorter notation $F(y^\varepsilon_ {\cdot/\varepsilon})$ for
$F ( (y^\varepsilon_{u/\varepsilon}, u\le r)  )$.

\begin{lemma}\label{lem3}
Let $(\Omega, \F, (\F_t),P)$ be a filtered probability space. Let
$(Y,\rho),\break  (Z, d)$ be metric spaces or $C^m$ manifolds in case $m\ge1$.
Let $\{(y_t^\varepsilon, t\le T), \varepsilon>0\}$ be a family of $\F
_t$-adapted stochastic processes on $Y$.
Let $ (z_t^\varepsilon)$ be a family of sample continuous $\F_t$-Markov
processes on $Z$.
Let $G\in E_m(Y\times Z)$. Let $0\le r<t$ and let $F\dvtx C([0,r];Y)\to\R$
be a bounded continuous function.
We define
\[
A(\varepsilon)\equiv A(\varepsilon,F,G):=F\bigl(y^\varepsilon_{\cdot/
\varepsilon}
\bigr) \int_r^t G\bigl(y^\varepsilon_{s/\varepsilon},
z^\varepsilon_{s/
\varepsilon}\bigr) \,ds.
\]
\begin{itemize}
\item If (1)--(3) in Assumption~\ref{assum} hold, then the random
variables $A(\varepsilon)$ converge weakly to $A$ as $\varepsilon\to0$, where
\[
A\equiv A(F,G):= F( \bar y_\cdot) \int_r^t
\int_Z G(\bar y_s, z) \,d\mu(z)\,ds.
\]
\item Assume (1)--(4) in Assumption~\ref{assum}. Then there is a
constant $c$, s.t. for $\varepsilon<1$,
\begin{eqnarray*}
&&W_1 (\hat P_{A(\varepsilon)}, \hat P_A )\\
&&\qquad\le c
|F|_\infty \max_{z\in Z} \delta \biggl(
|G|_E, z, {\varepsilon\over t-r} \biggr) +2 \varepsilon|F|_\infty
\min\bigl( |G|_\infty, \bigl|\Osc(G)\bigr|\bigr)
\\
&&\qquad\quad{}+c(t-r)|F|_\infty|G|_{\Lip} \bigl(W_1 \bigl(\hat
P_{y^\varepsilon
_{\cdot/\varepsilon}}, \hat P_{\bar y_\cdot} \bigr)+ W_1\bigl(\mu
^\varepsilon, \mu\bigr) \bigr)+c\varepsilon^\alpha|F|_\infty|G|_{\Lip}.
\end{eqnarray*}
\end{itemize}
\end{lemma}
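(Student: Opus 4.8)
The plan is to establish the weak convergence statement first and then upgrade it to the quantitative $W_1$-bound, since the two arguments share the same decomposition. The central device is to condition on the process $(y^\epsilon_{\cdot/\epsilon})$ and exploit the Markov/ergodic averaging for the $z$-component along frozen $y$-values. Concretely, I would first reduce to the case $F\equiv 1$ by the tower property (the factor $F(y^\epsilon_{\cdot/\epsilon})$ is $\F_r$-measurable up to the approximation in time, so it can be pulled out of the relevant conditional expectations), and then analyze $\int_r^t G(y^\epsilon_{s/\epsilon}, z^\epsilon_{s/\epsilon})\,ds$. The idea is to partition $[r,t]$ into sub-intervals of a mesh size $h$ that is small but large compared with the ergodic timescale $\epsilon$; on each sub-interval $[t_j,t_{j+1}]$ one freezes $y^\epsilon_{s/\epsilon}\approx y^\epsilon_{t_j/\epsilon}$ (legitimate by the equicontinuity in Assumption \ref{assum}(1), with an error controlled by $|G|_{Lip}$ times the modulus of continuity) and replaces $\int_{t_j}^{t_{j+1}} G(y^\epsilon_{t_j/\epsilon}, z^\epsilon_{s/\epsilon})\,ds$ by $(t_{j+1}-t_j)\int_Z G(y^\epsilon_{t_j/\epsilon}, z)\,d\mu_\epsilon(z)$ using Assumption \ref{assum}(2) applied to the function $z\mapsto G(y^\epsilon_{t_j/\epsilon}, z)$, whose $E_r$-norm is dominated by $|G|_E$ uniformly in the frozen $y$-value. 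Summing and sending $h\to0$ after $\epsilon\to0$ (using $\sup_z\delta(K,z,\epsilon)\to0$), one obtains $\int_r^t \int_Z G(y^\epsilon_{s/\epsilon},z)\,d\mu_\epsilon(z)\,ds$ in the limit; then Assumption \ref{assum}(3), i.e.\ $W_1(\mu_\epsilon,\mu)\to0$, together with the weak convergence $y^\epsilon_{\cdot/\epsilon}\Rightarrow\bar y_\cdot$ and the continuity of $(\sigma,\cdot)\mapsto F(\sigma)\int_r^t\int_Z G(\sigma_s,z)\,d\mu(z)\,ds$ as a functional on $C([0,r];Y)\times C([0,t];Y)$, gives the claimed limit $A$ by the continuous mapping theorem.

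For the quantitative bound I would make every approximation above explicit in $W_1$-distance between the laws of the random variables. Coupling $y^\epsilon_{\cdot/\epsilon}$ with $\bar y_\cdot$ optimally for $W_1\bigl(\hat P_{y^\epsilon_{\cdot/\epsilon}},\hat P_{\bar y_\cdot}\bigr)$ and coupling $\mu_\epsilon$ with $\mu$ optimally for $W_1(\mu_\epsilon,\mu)$, the triangle inequality for $W_1$ splits $W_1(\hat P_{A(\epsilon)},\hat P_A)$ into: (i) the ergodic-averaging error, bounded by $|F|_\infty$ times $\int_r^t \E|\,\cdot\,|\,ds$ with the integrand controlled by Assumption \ref{assum}(2) — this is where $\max_{z}\delta(|G|_E,z,\tfrac{\epsilon}{t-r})$ enters, after choosing the mesh optimally one can in fact take a single interval $[r,t]$ and invoke (2) directly with $t\leftarrow t-r$; (ii) the time-freezing error, bounded by $(t-r)|F|_\infty|G|_{Lip}$ times the time-modulus of continuity of $y^\epsilon_{\cdot/\epsilon}$, which by Assumption \ref{assum}(4) is $\le c\,\epsilon^\alpha$ (using $|t-s|^\alpha\le\epsilon^\alpha$ on scales of order $\epsilon$ and the uniform bound on the H\"older ratio); (iii) the error from replacing $\bar y$ by $y^\epsilon$ and $\mu$ by $\mu_\epsilon$ in the limiting functional, which is Lipschitz in both arguments with constant $\le c(t-r)|F|_\infty|G|_{Lip}$, yielding the terms $c(t-r)|F|_\infty|G|_{Lip}\bigl(W_1(\hat P_{y^\epsilon_{\cdot/\epsilon}},\hat P_{\bar y_\cdot})+W_1(\mu^\epsilon,\mu)\bigr)$. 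Collecting (i)--(iii) gives exactly the stated inequality.

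The main obstacle is handling the lack of joint adaptedness and the fact that $(z^\epsilon_{t/\epsilon})$ does not converge: one cannot simply pass to a limit in the pair $(y^\epsilon,z^\epsilon)$. The remark preceding the lemma flags precisely this. The resolution is that $G$ is only integrated against $z$ through its invariant measure, so we never need convergence of $z^\epsilon$ itself — only the time-average estimate in Assumption \ref{assum}(2), which holds conditionally along frozen $y$-values, and the Markov property of $(z^\epsilon_{t/\epsilon})$ to restart the averaging on each sub-interval with the conditional law of $y^\epsilon$ playing the role of a slowly varying parameter. A secondary technical point is justifying that $z\mapsto G(y,z)$ lies in $E_r(Z)$ with norm $\le|G|_E$ uniformly in $y$, and that $F$ being merely bounded continuous (not Lipschitz) still allows the continuous-mapping argument for the weak limit; for the $W_1$-bound $F$ enters only through $|F|_\infty$, so no regularity of $F$ beyond boundedness is needed there. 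Finally, care is required in the order of limits (first $\epsilon\to0$ at fixed mesh, then mesh $\to0$) and in absorbing the mesh optimization into the single constant $c$; since $\delta(\cdot,z,\cdot)$ is monotone in its last argument this optimization is elementary.
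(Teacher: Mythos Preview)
Your plan is correct and follows the paper's proof closely: the same decomposition into freezing $y$ on a partition, applying Assumption~\ref{assum}(2) on each piece via the Markov property of $(z^\epsilon_t)$, replacing $\mu_\epsilon$ by $\mu$ through Kantorovich duality, and passing to the weak limit in $y$ by the continuous mapping theorem. One point to sharpen: the paper fixes the mesh as $h=(t-r)\epsilon$ (equivalently $M_\epsilon=[1/\epsilon]$ sub-intervals), which simultaneously delivers the freezing error $\lesssim\epsilon^\alpha$ and the ergodic parameter $\epsilon/(t-r)$ on every piece---your phrase about invoking (2) ``on a single interval $[r,t]$'' should not be read literally, since without the partition the freezing error would be $(t-r)^\alpha$ rather than $\epsilon^\alpha$.
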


\begin{pf}
Let us fix the functions $F$, $G$, $r$, $t$ and define
\begin{eqnarray*}
{\mathcal E_1}(r,t)&=& \int_r^t G
\bigl(y_{s/\varepsilon}^\varepsilon, z^\varepsilon_{s/\varepsilon} \bigr)
\,ds-\int_r^t \int_Z G
\bigl(y_{s/
\varepsilon
}^\varepsilon, z\bigr) \,d\mu_\varepsilon(z)\,ds;
\\
{\mathcal E_2}&=&F\bigl(y^\varepsilon_{\cdot/\varepsilon} \bigr)
\biggl( \int_r^t \int_Z
G\bigl(y_{s/\varepsilon}^\varepsilon, z\bigr) \,d\mu_\varepsilon(z)\,ds-\int
_r^t \int_Z G
\bigl(y_{s/\varepsilon}^\varepsilon, z\bigr) \,d\mu(z)\,ds \biggr);
\\
I(\varepsilon)&=&F\bigl(y^\varepsilon_{\cdot/\varepsilon} \bigr)\int
_r^t \int_Z G
\bigl(y_{s/\varepsilon}^\varepsilon, z \bigr)\,d\mu(z) \,ds.
\end{eqnarray*}
The proof is split into three parts: (i) $F(y^\varepsilon_{\cdot/
\varepsilon} ) {\mathcal E_1}(r,t)$ converges to zero in $L_p(\Omega)$ for
any $p>1$,
(ii) $\error_2$ converges to zero in $L_p(\Omega)$ for any $p>1$ and
(iii) $I(\varepsilon)$ converges to $A$ weakly.

We first prove that $F (y^\varepsilon([0, {r\over\varepsilon}])
)\error_1(r,t)$
converges to zero in $L_p(\Omega)$.
Since $F$ is bounded it is sufficient to take $r=0$ and $F$ a constant,
and to work with $\error_1(0,t)$. Let us write
\[
{\mathcal E_1}:= \int_0^t G
\bigl(y_{s/\varepsilon}^\varepsilon, z^\varepsilon _{s/\varepsilon} \bigr)
\,ds -\int_0^t \int_Z G
\bigl(y_{s/\varepsilon}^\varepsilon, z\bigr) \,d\mu_\varepsilon(z)\,ds.
\]

Let $0=t_0<t_1<\cdots<t_M\le t$ be a partition of $[0, {t}]$ into
pieces of size ${t \varepsilon}$. Let $M\equiv M_\varepsilon=[{1\over
\varepsilon}]$.
Let $\Delta t_i=t_{i+1}-t_i$ and let $\tilde t={ t\varepsilon}M_\varepsilon
$. Below $a\sim b$ indicates ``$a-b=O(\varepsilon)$''
as $\varepsilon$ converges to $0$. Since $G\in E_m(Y\times Z)$,
\begin{eqnarray*}
\bigl\llvert \error_1(\tilde t,t)\bigr\rrvert &\le&2 \min \biggl(
|G|_\infty, \bigl|\Osc (G)\bigr|, |G|_{\Lip}\max_{0\le s \le t}
\int_Z\,d\bigl(z^\varepsilon_{s/
\varepsilon
},z\bigr)
\mu_\varepsilon(dz) \biggr) (t-\tilde t)
\\
&\le&\varepsilon2\min\bigl( |G|_\infty, \bigl|\Osc(G)\bigr|\bigr) \le2
\varepsilon\bigl(|G|_E\bigr).
\end{eqnarray*}

By the Lipschitz continuity of $G$, for each $\varepsilon>0$ the
following holds:
\begin{eqnarray*}
\error_3&:=&\Biggl\llvert \sum_{i=0}^{M_\varepsilon-1}
\int_{t_i}^{t_{i+1}} G\bigl( y^\varepsilon_{s/\varepsilon}
,z^\varepsilon_{s \varepsilon}\bigr)\,ds - \sum_{i=0}^{M_\varepsilon-1}
\int_{t_i}^{t_{i+1}} G\bigl(y^\varepsilon
_{t_i/\varepsilon}, z^\varepsilon_{s \varepsilon} \bigr)\,ds \Biggr\rrvert
\\
&\le&|G|_{\Lip} \sum_{i=0}^{M_\varepsilon-1}
\int_{t_i}^{t_{i+1}} \rho\bigl( y^\varepsilon_{s/\varepsilon},
y^\varepsilon_{t_i/\varepsilon}\bigr) \,ds.
\end{eqnarray*}
By equi-uniform continuity of $(y^\varepsilon_{s/\varepsilon})$, for
almost surely all $\omega$, $\error_3$ converges to zero. Since
$\error_3$
is bounded the convergence is in $L_p(\Omega)$.
If $(y^\varepsilon_{s/\varepsilon})$ is assumed to be equi-H\"older
continuous as in condition (4), there is a convergence rate of
$\varepsilon
^\alpha|G|_{\Lip}$
for the $L^p$ convergence.

We prove next that $ \sum_{i=0}^{M_\varepsilon-1} \int_{t_i}^{t_{i+1}} G(
y^\varepsilon_{t_i/\varepsilon}, z^\varepsilon_{s/\varepsilon} ) \,ds $
converges.
We apply the Markov property of $(z_t^\varepsilon)$ and we use the fact
that $(y_t^\varepsilon)$ is adapted to the filtration $(\F_t)$, with
respect to which
$(z_t^\varepsilon)$ is a Markov process:
\begin{eqnarray*}
&& \sum_{i=1}^{M_\varepsilon-1} \E\biggl\llvert \int
_{t_i}^{t_{i+1}} G\bigl( y^\varepsilon_{t_i /\varepsilon},
z^\varepsilon_{s/\varepsilon} \bigr) \,ds - \Delta t_i \int
_Z G\bigl(y^\varepsilon_{t_i /\varepsilon}, z\bigr)\,d\mu
_\varepsilon (z)\biggr\rrvert
\\
&&\qquad \le\sum_{i=1}^{M_\varepsilon-1} \Delta t_i
\E \biggl( \E \biggl\{ \biggl\llvert {1\over\Delta t_i} \int
_{t_i}^{t_{i+1}} G\bigl(y^\varepsilon_{t_i/
\varepsilon},
z^\varepsilon_{s/\varepsilon} \bigr) \,ds\\
&&\hspace*{112pt}{} -\int_Z G
\bigl(y^\varepsilon_{t_i /\varepsilon}, z\bigr)\,d\mu_\varepsilon(z) \biggr
\rrvert \Big| \F_{t_i/\varepsilon} \biggr\} \biggr)
\\
&&\qquad= \sum_{i=1}^{M_\varepsilon-1} \Delta t_i
\E \biggl( \E \biggl( \biggl\llvert {\varepsilon^2\over\Delta t_i} \int
_{t_i/
\varepsilon
^2}^{t_{i+1}/\varepsilon^2} G\bigl(y, z^\varepsilon_{s \varepsilon}
\bigr) \,ds \\
&&\hspace*{121pt}{}-\int_Z G(y, z) \,d\mu_\varepsilon(z) \biggr
\rrvert \biggr) \Big|_{y=
y^\varepsilon
_{t_i/\varepsilon}} \biggr).
\end{eqnarray*}
Since ${\varepsilon^2\over\Delta t_i}={\varepsilon\over t}$, we may now
apply condition (2) and obtain
\begin{eqnarray*}
&& \E \biggl( \biggl\llvert {\varepsilon^2\over\Delta t_i} \int_{t_i/
\varepsilon
^2}^{t_{i+1}/\varepsilon^2}
G\bigl(y, z^\varepsilon_{s \varepsilon} \bigr) \,ds -\int_Z
G(y, z)\,d\mu_\varepsilon(z) \biggr\rrvert \biggr)
\\
&&\qquad\le\delta \biggl( \bigl\llvert G\bigl(y^\varepsilon_{t_i/\varepsilon}, \cdot
\bigr)\bigr\rrvert _E, z_{t_i /\varepsilon}^\varepsilon,
{\varepsilon\over t} \biggr) \le\delta \biggl( |G|_E,
z_{t_i /\varepsilon}^\varepsilon, {\varepsilon
\over t} \biggr).
\end{eqnarray*}
We record that
%
\begin{eqnarray}
\label{3.3-5} \error_4&:=& \E\Biggl\llvert \sum
_{i=0}^{M_\varepsilon-1} \int_{t_i}^{t_{i+1}}
G\bigl( y^\varepsilon_{t_i /\varepsilon}, z^\varepsilon_{s/\varepsilon} \bigr)
\,ds -\sum_{i=0}^{M_\varepsilon-1} \Delta t_i
\int_Z G\bigl(y^\varepsilon_{t_i
/
\varepsilon}, z\bigr)\,d
\mu_\varepsilon(z)\Biggr\rrvert
\nonumber
\\[-8pt]
\\[-8pt]
\nonumber
& \le&\max_{z\in Z} \delta \biggl( |G|_E, z,
{\varepsilon\over t} \biggr).
\end{eqnarray}
Let us define
\[
\error_5:=\sum_{i=0}^{M_\varepsilon-1}
\Delta t_i \int_Z G\bigl(y^\varepsilon_{t_i
/\varepsilon},
z\bigr)\,d\mu_\varepsilon(z) - \int_0^t \int
_Z G\bigl(y^\varepsilon_{s/\varepsilon}, z\bigr) \,d
\mu_\varepsilon(z)\,ds.
\]
By the definition of Riemann integral
\[
\error_5 \le|G|_{\Lip} \sum_{i=0}^{M_\varepsilon-1}
\Delta t_i {\Osc}_{ [s_i, s_{i+1}]}\bigl(y^\varepsilon_{s/\varepsilon}
\bigr),
\]
where $\Osc_{[a,b]}(f)$ denotes the oscillation of a function $f$ in
the indicated interval.
Since $(y^\varepsilon_{s/\varepsilon})$ is equi-uniform continuous on
$[0,T]$, $\error_5\to0$ in $L_p$.
Given H\"older continuity of $(y^\varepsilon_{s/\varepsilon})$
from condition (4), we have the quantitative estimates: $|\error
_5|_{L_p(\Omega)} \le C |G|_{\Lip} \varepsilon^\alpha$.
To summarize,
\[
\bigl|\error_1(0,t)\bigr|\le\bigl|\error_1(\tilde t,t)\bigr|+
\error_3+\error_4+\error_5.
\]
It follows that $F (y^\varepsilon_{r/\varepsilon})\error
_1(r,t)$ converges to zero.

When condition (4) holds, there is a constant $C$ such that
%
\begin{eqnarray}
&&\bigl\llvert F \bigl(y^\varepsilon_ {\cdot/\varepsilon}
\bigr)\error _1(r,t)\bigr\rrvert _{L_p(\Omega)}\nonumber\\
&&\qquad\le|F|_\infty \bigl(2\varepsilon\min\bigl( |G|_\infty, \bigl|\Osc(G)\bigr|
\bigr)+\error _3+\error_4+\error_5 \bigr)\nonumber
\nonumber
\\[-8pt]
\\[-8pt]
\nonumber
&&\qquad\le C|F|_\infty\bigl( \varepsilon^\alpha+\varepsilon\bigr)
|G|_{\Lip} +2\varepsilon |F|_\infty\min\bigl(
|G|_\infty, \bigl|\Osc(G)\bigr|\bigr)
\\
&&\qquad\quad{}+C|F|_\infty\max_{z\in Z} \delta \biggl(
|G|_E, z, {\varepsilon\over
t-r} \biggr). \nonumber
\end{eqnarray}
For any two random variables on the same probability space and with the
same state space, the
$L_p$ norm of their difference dominates their Wasserstein $p$-distance.
The random variable
\[
F \bigl(y^\varepsilon_ {r/\varepsilon} \bigr)\int_r^t
G\bigl(y_{s/
\varepsilon
}^\varepsilon, z^\varepsilon_{s/\varepsilon} \bigr)
\,ds -F \bigl(y^\varepsilon _{r/\varepsilon}\bigr) \int_r^t
\int_Z G\bigl(y_{s/\varepsilon
}^\varepsilon , z\bigr) \,d
\mu_\varepsilon(z)\,ds \stackrel{W_p(N)} { \to} 0,
\]
with the same rate as indicated above.

We proceed to step (ii). It is clear that for almost all $\omega$,
$F(y^\varepsilon_{\cdot/\varepsilon} )\int_r^t G(y^\varepsilon_{s/
\varepsilon} , z)\,ds$ is Lipschitz continuous in $z$. For any $z_1, z_2\in Z$,
\begin{eqnarray*}
& &\biggl\llvert F\bigl(y^\varepsilon_{\cdot/\varepsilon} \bigr) \int
_r^tG\bigl(y_{s/
\varepsilon}^\varepsilon,
z_1\bigr) \,ds -F\bigl(y^\varepsilon_{\cdot/\varepsilon} \bigr) \int
_r^t G\bigl(y_{s/\varepsilon
}^\varepsilon,
z_2\bigr)\,ds \biggr\rrvert
\\
& &\qquad\le|F|_\infty d(z_1,z_2) \int
_r^t \bigl|G\bigl(y_{s/\varepsilon}^\varepsilon,
\cdot\bigr)\bigr|_{\Lip} \,ds \le(t-r) d(z_1,z_2)|F|_\infty|G|_{\Lip}.
\end{eqnarray*}
By the Kantorovich duality formula, for the distance between two
probability measures $\mu_1$ and $\mu_2$,
\[
W_1(\mu_1, \mu_2)=\sup \biggl\{ \int U \,d
\mu_1-\int U \,d\mu_2 \dvtx |U|_{\Lip
}\le1 \biggr
\},
\]
we have
\[
|\error_2|\le(t-r)\cdot|F|_\infty\cdot|G|_{\Lip}
\cdot W_1\bigl(\mu ^\varepsilon, \mu\bigr).
\]

For part (iii), let $U$ be a continuous function on $C([0,T];Y)$. If
$\sigma\in C([0,T]; Y)$, let us denote
by $\sigma([0,r])$ the restriction of the path to $[0,r]$. Since $F$ is
bounded continuous and $G$ is Lipschitz continuous,
\[
\sigma\mapsto U \biggl( F\bigl(\sigma\bigl([0,r]\bigr) \bigr) \biggl( \int
_r^t \int_Z G(\sigma
_s, z )\,d\mu(z) \,ds \biggr) \biggr)
\]
is a
continuous function on $C([0,T];Y)$.
By the weak convergence of $(y^\varepsilon_{\cdot/\varepsilon})$, $\E
 ( U(I(\varepsilon)) )$ converges to $\E
(U(A(F,G)) )$ and
the random variables $I(\varepsilon)$ converge weakly to $A(F,G)$.
By now, we have proved that $ A(\varepsilon,F,G)$ converges to $A(F,G)$
weakly; we thus conclude the first part of the lemma.


Let us assume condition (4) from Assumption~\ref{assum}. In particular,
$(y^\varepsilon_{\cdot/\varepsilon})$ converges in $W_1(C([0,T];Y))$.
Let $U$ be a Lipschitz continuous function on $C([0,T];Y)$.
We define $\tilde U\dvtx C([0,T];Y)\to\R$ by
\[
\tilde U (\sigma)=U \biggl( F\bigl(\sigma\bigl([0,r]\bigr) \bigr) \biggl( \int
_r^t \int_Z G(
\sigma_s, z )\,d\mu(z) \,ds \biggr) \biggr).
\]
Let $\sigma^1, \sigma^2$ are two paths on $Y$,
\begin{eqnarray*}
&&\bigl\llvert \tilde U(\sigma_1)-\tilde U(\sigma_2)\bigr
\rrvert
\\
&&\qquad\le|U|_{\Lip}\cdot|F|_\infty\biggl\llvert \int
_r^t \int_Z G\bigl(
\sigma ^1_s, z \bigr)\,d\mu(z) \,ds - \int
_r^t \int_Z G\bigl(
\sigma^2_s, z \bigr)\,d\mu(z) \,ds\biggr\rrvert
\\
&&\qquad\le(t-r) |U|_{\Lip}\cdot|F|_\infty\cdot|G|_{\Lip}
\cdot\sup_{0\le s\le T} \rho\bigl(\sigma^1_s,
\sigma^2_s\bigr).
\end{eqnarray*}
By the Kantorovitch duality and assumption (4),
\[
W_1 (\hat P_{I(\varepsilon)}, \hat P_I )\le(t-r) \cdot
|F|_\infty\cdot|G|_{\Lip} \cdot W_1 (\hat
P_{y^\varepsilon
_{\cdot
/\varepsilon}} , \hat P_{\bar y_\cdot} ).
\]

We collect all the estimations together. Under assumptions (1)--(4), the
following estimates hold:
\begin{eqnarray*}
W_1 (\hat P_{A(\varepsilon)}, \hat P_A ) &\le&
C|F|_\infty|G|_{\Lip}\bigl(\varepsilon^\alpha+
\varepsilon\bigr) +C|F|_\infty\max_{z\in Z} \delta
\biggl( |G|_E, z, {\varepsilon\over t-r} \biggr)
\\
&&{}+C (t-r) \cdot|F|_\infty\cdot|G|_{\Lip} \cdot \bigl(
W_1 (\hat P_{y^\varepsilon_{\cdot/\varepsilon}} , \hat P_{\bar y_\cdot} )+
W_1(\mu_\varepsilon, \mu) \bigr)
\\
&&{}+2\varepsilon|F|_\infty\min\bigl( |G|_\infty, \bigl|\Osc(G)\bigr|
\bigr).
\end{eqnarray*}
We may now limit ourselves to $\varepsilon\le1$ and conclude part 2 of
the lemma.
\end{pf}

\begin{remark}
In the lemma above, we should really think that the $z^\varepsilon$
process and process $y^\varepsilon$ follow different clocks,
the former is run at the fast time scale ${1\over\varepsilon}$ and the
latter at scale $1$.
\end{remark}

\begin{example}\label{ex1}
Let $(g_s)$ be a Brownian motion on $G=\mathit{SO}(n)$, solving
\[
dg_t=\sum_{k=1}^N
L_{g_tA_k}\,dw_t^k.
\]
Here, $\{A_1, \ldots, A_N\}$ is an orthonormal basis of ${\mathfrak g}$.
In Lemma~\ref{lem3} we take $z_t^\varepsilon=g_{t/\varepsilon}$, then
condition (2) holds.
If $f$ is a Lipschitz continuous function, it is well known that the
law of large numbers holds for $\int_0^t f(g_s)\,ds$,
so does a central limit theorem. The remainder term in the central
limit theorem
is of order $\sqrt t$ and depends on $f$ only through the Lipschitz
constant $|f|_{\Lip}$.

It is easy to see that the remainder term in the law of large numbers
depends only on the Lipschitz constant of the function.
Without loss of generality, we assume that $\int f\, dg=0$.
Let $\alpha$ solve the Poisson equation: $\Delta^G \alpha=f$.
Then
\[
{1\over t}\int_0^t
f(g_s)\,ds= {1\over t}\alpha(g_t)-
{1\over t} \alpha (g_0)-\sum
_k {1\over t}\int_0^t
(D \alpha) (g_s A_k)\,d w_s^k.
\]
Since $\alpha$ is bounded, we are only concerned with the martingale term.
By Burkholder--Davis--Gundy inequality, its $L^2$ norm is bounded by
\[
{2\over t} \Biggl(\sum_{k=1}^N
\int_0^t \E \bigl( (D \alpha)
(g_s A_k) \bigr)^2 \,ds \Biggr)^{1/2}
\le{2\over t} \biggl(\int_0^t
\E|D\alpha|^2_{g_s}\,ds \biggr)^{1/2}.
\]
By elliptic estimates, $|D\alpha|$ is bounded by $|f|_{L_\infty}$.
Since $f$ is centered, it is bounded by $\Osc(f)$.
In summary,
\[
\E \biggl({1\over t}\int_0^t
f(g_s)\,ds-\int_N f(g)\,dg \biggr)^2
\le C\bigl (\Osc (f)t^{-{1/2}}\bigr)^{2}.
\]
\end{example}

In Theorem~\ref{thm1}, we may wish to add an extra drift of the form
${1\over\varepsilon}A^*$ where $A\in{\mathfrak g}$, so that $\L_G$ is
${1\over
2}\Delta^G +L_{gA}$.
Translations by orthogonal matrices are isometries, so for any $A\in
{\mathfrak g}
$ the vector field $gA$ is a killing field, and
the Haar measure remains an invariant measure for the diffusion with
infinitesimal generator ${1\over2} \Delta^G+L_{gA}$.
However, on a compact Lie group no left invariant vector field is the
gradient of a function and
${1\over2} \Delta^G+L_g A$ is no longer a symmetric operator.
In this case, we do not know how to obtain the estimate in the example.

\section{Proof}
We are ready to prove the main theorem. In Lemma~\ref{lem2}, we used a
fundamental technique to split the integral
\[
\int_{r/\varepsilon}^{t/\varepsilon} (DF)_{\tilde x_s^\varepsilon}
(H_{x_s^\varepsilon}) \bigl(g_s^\varepsilon e_0
\bigr)\,ds
\]
into the sum of a process of finite variation and a martingale. The
computation in the proof of
Lemma~\ref{lem2} will be used to prove the weak convergence.
A similar consideration was used in Li \cite{Li-averaging}, which was
inspired by a paper of Hairer and Pavliotis \cite{Hairer-Pavliotis}.
In the above-mentioned papers, the convergence is in probability; while
here we can only expect weak convergence. To prove the convergence, we
apply Stroock--Varadhan's martingale method
and Lemma~\ref{lem3}; see also Borodin and Freidlin \cite{Borodin-Freidlin95};
Papanicolaou, Stroock and Varadhan \cite
{Papanicolaou-Stroock-Varadhan73,Papanicolaou-Stroock-Varadhan77} where
the limit is given by a double integration in time. Our formulation for
the limit is in terms of space averaging. Finally, we use explicit
eigenfunctions of the Laplacian on $\mathit{SO}(n)$ to compute the limiting generator.

\begin{pf*}{Proof of Theorem \ref{thm1}}
We define a Markov generator $\bar\L$ on $\mathit{OM}$. If $F\dvtx \mathit{OM}\to\R$ is
bounded and Borel measurable and
$\{e_i\}$ is an orthonormal basis of $\R^n$,
we define
%
\begin{eqnarray}
\label{generator} \bar\L F&=&- \sum_{i=1}^n
\int_G (\nabla DF)_{u} \bigl(H_{u}(
ge_0), H_{u} ( e_i) \bigr) h_i
(g) \,dg
\nonumber
\\[-8pt]
\\[-8pt]
\nonumber
&&{} -\sum_{i=1}^n \int_G
(DF)_u (H_ue_i) L_{g\bar A }
h_i( g) \,dg,
\end{eqnarray}
where $h_i$ is the solution to the Poisson equation (\ref{Poisson}).
Since $(\tilde x_{t/\varepsilon}^\varepsilon)$
is tight by Lemma~\ref{lem2}, every sub-sequence of $(\tilde x_{t/
\varepsilon}^\varepsilon)$
has a sub-sequence that converges in distribution.
We will prove that the probability distributions\vspace*{-1pt} of $(\tilde x_{t/
\varepsilon}^\varepsilon)$ converge weakly
to the probability measure, $\bar P$, determined by $\bar\L$.
It is sufficient to prove that if $(\bar y_t)$ is a limit
of $(\tilde x_{t/\varepsilon}^\varepsilon)$, then
\[
F(\bar y_t)-F(u_0)-\int_0^t
\bar\L F(\bar y_s)\,ds
\]
is a martingale. Since the convergence is weak, and the Markov process
$(\tilde x_t^\varepsilon, g^\varepsilon_{t/\varepsilon})$ is not tight, we
do not
have a suitable filtration on $\Omega$ to work with.
We formulate the above convergence on the space of continuous
paths over $\mathit{OM}$ on a given time interval $[0,T]$.

Let $X_t$ be the coordinate process on the path space over $\mathit{OM}$, $\G
_t=\sigma\{(X_s)\dvtx 0\le s\le t\}$ and let
$\hat P_{\tilde x^\varepsilon}$ be the probability distribution\vspace*{-1pt} of
$(\tilde x_{t/\varepsilon}^\varepsilon)$ on the
path space over $\mathit{OM}$. By taking a subsequence if necessary, we may
assume that
$\{\hat P_{\tilde x^\varepsilon}\}$ converges to $\bar P$.

Let $F\dvtx \mathit{OM}\to\R$ be a smooth function with compact support.
We will prove that with respect to $\bar P$,
\[
\E \biggl\{F(X_t)-F(X_r)-\int_r^t
\bar\L F(X_s)\,ds \Big| \G_r \biggr\}=0.
\]
Since $\hat P_{\tilde x_\varepsilon}\to\bar P$ weakly, we only need to
prove that
for all bounded and continuous real value random variables $\xi$ that
are measurable with respect
to $\G_r$,
%
\begin{equation}
\label{proof2} \lim_{\varepsilon\to0}\int\xi\bigl(F(X_t)-F(X_r)
\bigr) \,d\hat P_{\tilde
x^\varepsilon} =\int \biggl(\xi\int_r^t
\bar\L F(X_s) \,ds \biggr) \,d\bar P.
\end{equation}
By formula (\ref{generator-1}) in the proof of Lemma~\ref{lem2}, for
$t\ge r$,
%
\begin{eqnarray}
\label{generator-2} &&F\bigl(\tilde x_{t/\varepsilon}^{\varepsilon}\bigr)-F\bigl(\tilde
x_{r/
{\varepsilon
}}^{\varepsilon}\bigr)\nonumber
\\
&&\qquad\sim-{\varepsilon} \sum_{i=1}^n\int
_{r/{\varepsilon}}^{t/
{\varepsilon}} (\nabla DF)_{ \tilde x_s^{\varepsilon}}
\bigl(H_{\tilde x_s^{\varepsilon}}\bigl( g_s^{\varepsilon} e_0
\bigr), H_{\tilde x_s^{\varepsilon}} ( e_i) \bigr) h_i
\bigl(g_s^{\varepsilon}\bigr)\,ds
\nonumber
\\[-8pt]
\\[-8pt]
\nonumber
&&\qquad\quad{}-{\varepsilon} \sum_{i=1}^n\int
_{r/{\varepsilon}}^{t/{\varepsilon
}} ( DF)_{\tilde x_s^{\varepsilon}} (H_{\tilde x_s^{\varepsilon}}
e_i ) L_{g_s^{\varepsilon} \bar A } h_i\bigl( g_s^{\varepsilon}
\bigr)\,ds
\\
&&\qquad\quad{}-\sqrt{\varepsilon} \sum_{i=1}^n\sum
_{k=1}^N \int_{r/{\varepsilon
}}^{t/{\varepsilon}}
( DF)_{\tilde x_{s}^{\varepsilon}} (H_{\tilde x_{s}^{\varepsilon}} e_i ) (Dh_i)_{( g_s^{\varepsilon})}
\bigl(g_s^{\varepsilon} A_k\bigr) \,dw_s^k.
\nonumber
\end{eqnarray}

Hence, up to a term of order $\varepsilon$,
\begin{eqnarray*}
&&\int\xi\bigl(F(X_t)-F(X_r)\bigr) \,d\hat
P_{\tilde x^\varepsilon}
\\
&&\qquad=O(\varepsilon) -\varepsilon\sum_{i=1}^n
\int \biggl(\xi\int_{r/
\varepsilon}^{t/\varepsilon} (\nabla
DF)_{ X_s} \bigl(H_{X_s}( G_s e_0),
H_{X_s} ( e_i) \bigr) h_i (G_s)\,ds
\biggr)\,d\hat P_{\tilde x^\varepsilon}
\\
&&\qquad\quad{}-\varepsilon\sum_{i=1}^n\int \biggl(\xi
\int_{r/\varepsilon
}^{t/
\varepsilon} ( DF)_{X_s}
(H_{X_s} e_i ) L_{G_s\bar A } h_i(
G_s)\,ds \biggr)\,d\hat P_{\tilde x^\varepsilon}. 
\end{eqnarray*}
We prove this by working with the original processes. Let $(\tilde
x_t^\varepsilon)$ denote a sub-sequence of the original sequence with
limit $(\bar y_s)$.
For each $i, l=1,\ldots, n$, let us define
\[
\beta_{li}(u)=(\nabla DF)_{u} \bigl(H_{u}(
e_l), H_{u} ( e_i) \bigr).
\]
By linearity of $H_u$ and $\nabla DF$,
\begin{eqnarray*}
&&(\nabla DF)_{u} \bigl(H_{u}( g e_0),
H_{u} e_i \bigr) h_i (g)
\\
&&\qquad=\sum_{l=1}^n(\nabla
DF)_{u} \bigl(H_{u}( e_l), H_{u}
( e_i) \bigr) \langle ge_0, e_l \rangle
h_i (g)
=\sum_{l=1}^n\beta_{li}(u)
\langle ge_0, e_l \rangle h_i (g),
\end{eqnarray*}
for each $i=1,\ldots, n$; and
\begin{eqnarray*}
&& -\varepsilon\int_{r/\varepsilon}^{t/\varepsilon} (\nabla
DF)_{ \tilde x^\varepsilon_s} \bigl(H_{ \tilde x^\varepsilon_s}\bigl( g^\varepsilon_s
e_0\bigr), H_{ \tilde x^\varepsilon(s )} ( e_i) \bigr)
h_i \bigl( g^\varepsilon_s\bigr) \,ds
\\
&&\qquad =-\varepsilon\sum_{l=1}^n \int
_{r/\varepsilon}^{t/\varepsilon} \beta_{li}\bigl(\tilde
x^\varepsilon_s \bigr) \bigl\langle g^\varepsilon_s
e_0, e_l \bigr\rangle h_i
\bigl(g^\varepsilon_s\bigr) \,ds
\\
&&\qquad =- \sum_{l=1}^n \int
_{r}^{t} \beta_{li} \bigl(\tilde
x^\varepsilon _{s/
\varepsilon} \bigr) \bigl\langle g^\varepsilon_{s/\varepsilon}
e_0, e_l \bigr\rangle h_i
\bigl(g^\varepsilon _{s/\varepsilon}\bigr) \,ds.
\end{eqnarray*}
We observe that $( g^\varepsilon_{s\varepsilon} )$ satisfies the equation
$dg_t=\sum_k g_tA_k \circ dw_t^k$ with initial value the identity element.
The solution stays in the connected component $\mathit{SO}(n)$. It is ergodic
with the normalized Haar measure $dg$ on $\mathit{SO}(n)$ as its
invariant measure and it satisfies the Birkhoff ergodic theorem; see
Example~\ref{ex1}.
By Lemma~\ref{lem2}, $(\tilde x^\varepsilon_{s/\varepsilon} )$ is tight,
and equi-uniformly H\"older continuous on $[0,T]$.
In Assumption~\ref{assum}, we take $z_t^\varepsilon=g_t^\varepsilon$,
$d\mu_\varepsilon=dg$, $y^\varepsilon_t =\tilde x^\varepsilon_t$ and check that
conditions (1)--(4) are satisfied. In Lemma~\ref{lem3}, we take
$G(u,g)=\sum_{l=1}^n\beta_{li}(u)  \langle ge_0, e_l
\rangle h_i (g)$.
Since the functions $h_i\dvtx G\to\R$ are smooth and $G$ is compact, also
$\beta_{li}$ are smooth and bounded
by construction, we may apply Lemma~\ref{lem3}.
If $\phi$ is a bounded real valued continuous function on $C([0, r];
\mathit{OM})$, let $\xi=\phi(\tilde x^\varepsilon_{u/\varepsilon}, 0\le u \le r)$.
Then
\begin{eqnarray*}
&&\lim_{\varepsilon\to0}\E \Biggl( \xi\sum_{l=1}^n
\int_{r}^{t} \beta_{li}\bigl(\tilde
x^\varepsilon_{s/\varepsilon} \bigr) \bigl\langle g^\varepsilon
_{s/\varepsilon} e_0, e_l \bigr\rangle h_i
\bigl(g^\varepsilon_{s/
\varepsilon
}\bigr) \,ds \Biggr)
\\
&&\qquad= \sum_{l=1}^n \E \biggl( \xi\int
_{r}^{t} \beta_{li}(\bar
y_s )\,ds \biggr) \int_G \langle
ge_0, e_l \rangle h_i (g) \,dg
\\
&&\qquad= \sum_{l=1}^n \E \biggl( \xi\int
_{r}^{t} \nabla DF_{\bar y_s}
\bigl(H_{\bar y_s}( e_l), H_{\bar y_s} ( e_i)
\bigr) \biggr) \int_G \langle ge_0,
e_l\rangle h_i (g) \,dg
\\
&&\qquad= \sum_{l=1}^n \E \biggl( \xi\int
_{r}^{t} \int_G \nabla
DF_{\bar y_s} \bigl(H_{\bar y_s}( ge_0), H_{\bar y_s}
( e_i) \bigr) h_i (g) \,dg \biggr).
\end{eqnarray*}
%

By the same reasoning, we also have
\begin{eqnarray*}
&&\lim_{\varepsilon\to0}{\varepsilon} \E \biggl( \xi\int
_{r/
{\varepsilon
}}^{t/{\varepsilon}} ( DF)_{\tilde x_s^{\varepsilon}} (H_{\tilde
x_s^{\varepsilon}}
e_i ) L_{g_s^{\varepsilon} \bar A } h_i\bigl( g_s^{\varepsilon
}
\bigr)\,ds \biggr)
\\
&&\qquad= \E \biggl(\xi\int_r^t (DF)_{\bar y_s}
(H_{\bar y_s}e_i)\,ds \int_G
L_{g\bar A } h_i( g) \,dg \biggr).
\end{eqnarray*}
We have proved (\ref{proof2}). Since every sub-sequence of $\hat
P_{\tilde x^\varepsilon}$
has a sub-sequence that converges to the same limit, we have proved
$\hat P_{\tilde x^\varepsilon} \to\bar P$ weakly.

Finally, we compute the limiting Markov generator $\bar\L$. We observe
that there is a family of eigenfunctions
of the Laplacian on $G$ with eigenvalue $-{n-1\over2}$.
Indeed, since $\sum_{k=1}^{n(n-1)/2} (A_k)^2=-{n-1\over2}I$,
\begin{eqnarray*}
\sum_{k=1}^{n(n-1)/2}L_{gA_k}L_{gA_k}
\biggl(-{4\over
n-1}\langle ge_0, e_i\rangle
\biggr) &=&-{4\over n-1}\sum_{k=1}^{n(n-1)/2}
\bigl\langle g(A_k)^2e_0, e_i
\bigr\rangle
\\
&=&2\langle ge_0, e_i\rangle.
\end{eqnarray*}
%
Thus,
\[
h_i=-{4\over n-1}\langle ge_0,
e_i\rangle
\]
is the solution to the Poisson equation (\ref{Poisson}):
\[
\L_G h_i=\langle ge_0, e_i
\rangle\qquad \mbox{where } \L_G ={1\over
2} \sum
_{k=1}^{n(n-1)/2}L_{gA_k}L_{gA_k}.
\]
We compute the second integral in (\ref{generator}). Since $L_{g\bar A
}h_i=-{4\over n-1} \langle g\bar A e_0, e_i\rangle $, we have
\begin{eqnarray*}
&&\sum_{i=1}^n \int_G(DF)_u(H_ue_i)L_{g\bar A}h_i(g)\,dg
\\
&&\qquad=-{4\over n-1}\int_G (DF)_u (
H_u g\bar A e_0)\,dg
\\
&&\qquad=-{4\over n-1} (DF)_u \biggl( H_u \biggl(
\int_G g\bar A e_0\,dg \biggr) \biggr)=0.
\end{eqnarray*}
Consequently,
\begin{eqnarray*}
\bar\L F&=&- \sum_{i=1}^n\int
_G (\nabla DF)_{u} \bigl(H_{u}(
ge_0), H_{u} ( e_i) \bigr) h_i
(g) \,dg
\\
&=&- \sum_{i,j=1}^n\int
_G (\nabla DF)_{u} \bigl(H_{u}(
e_j), H_{u} ( e_i) \bigr) \langle
ge_0, e_j\rangle h_i (g) \,dg.
\end{eqnarray*}
In the last step, we use the fact that $H_u(\cdot)$ is linear and that
$\{e_i\}$ is an o.n.b. of $\R^n$.
Let us define
\begin{eqnarray*}
a_{i,j}(e_0)&=&-\int_G\langle
ge_0, e_j\rangle h_i(g)\,dg
\\
&=&{4\over n-1}\int_G\langle
ge_0, e_j\rangle \langle ge_0,
e_i\rangle \,dg.
\end{eqnarray*}
Then
%
\begin{equation}
\label{generator-10} \bar\L F=- \sum_{i,j=1}^n
a_{i,j} (\nabla DF)_{u} \bigl(H_{u}(
e_j), H_{u} ( e_i) \bigr).
\end{equation}

To further identify the limit, we first prove that $a_{i,j}(e_0)$ is
independent of $e_0$. Recall that
$G$ acts transitively on the unit sphere of $\R^n$.
Let $e_0'\in\R^n$ we take $O $ such that $Oe_0'=e_0$. By the right
invariant property of the Haar measure,
\[
\int_G\bigl\langle ge_0',
e_j\bigr\rangle \bigl\langle ge_0',
e_i\bigr\rangle \,dg=\int_G\langle
gOe_0, e_j\rangle \langle gOe_0,
e_i\rangle \,dg =\int_G\langle
ge_0, e_j\rangle \langle ge_0,
e_i\rangle \,dg.
\]
We first compute the case of $i\neq j$ and $n=2$: 
%
\[
a_{1,2}(e_1)=\int_{\mathit{SO}(2)}\langle
ge_1, e_1\rangle \langle ge_1,
e_2\rangle \,dg =-\int_0^{2\pi} \cos(
\theta)\sin(\theta) \,d\theta=0.
\]

If $n>2$, for any $i\neq j$, there is an orientation preserving
rotation matrix $O$ such that
$Oe_i=-e_i$ and $Oe_j=e_j$. For example, if $i=1, j=2$, we take
$O=(-e_1, e_2, -e_3, e_4, \ldots, e_n)$. So
\begin{eqnarray*}
\int_G\langle ge_0, e_j\rangle
\langle ge_0, e_i\rangle \,dg&=&-\int_G
\langle ge_0, Oe_j\rangle \langle ge_0,
Oe_i\rangle \,dg
\\
&=&-\int_G\langle ge_0, e_j
\rangle \langle ge_0, e_i\rangle \,dg.
\end{eqnarray*}

Thus, $a_{i,j}=0$ if $i\neq j$.
Let
\[
C_i=\int_G \langle ge_0,
e_i\rangle ^2 \,dg.
\]
For $i=1, \ldots, n$, $C_i=\int_G \langle ge_0, e_i\rangle ^2 \,dg$ is
independent of
$i$ and
\[
\int_G\sum_{i=1}^n
\langle ge_0, e_i\rangle ^2 \,dg=1
\]
and consequently $ C_i={1\over n}$. The nonzero values of $(a_{i,j})$ are
\[
a_{i,i}=-\int_G \langle ge_0,
e_i\rangle h_i(g)\,dg={ 4\over n-1 } \int
_G\langle ge_0, e_i\rangle
^2 \,dg ={4\over(n-1)n}.
\]
By the definition, $\Delta_H F(u)=\sum_{i=1}^n L_{H(e_i)}L_{H(e_i)}F$.
Since $\nabla$ is the canonical flat connection,
$\nabla_{H(e_i)}H(e_i)=0$. See the paragraph before equation (\ref
{generator-1}).
By (\ref{generator-10}), we see that
\begin{eqnarray*}
\bar\L F (u)&=&- \sum_{i,j=1}^n
a_{i,j} (\nabla DF)_{u} \bigl(H_{u}(
e_j), H_{u} ( e_i) \bigr)
\\
&=&{4\over(n-1)n}\sum_{i=1}^n (
\nabla DF)_u \bigl(H_{u}( e_i),
H_{u} ( e_i) \bigr)
\\
&=&{4\over(n-1)n}\Delta_H F(u).
\end{eqnarray*}
We conclude that $(\tilde x^\varepsilon_{t/\varepsilon})$ is a diffusion
process with infinitesimal generator ${4\over(n-1)n}\Delta_H$.
Since $(x^\varepsilon_{t/\varepsilon})$ is the projection of $(\tilde
x^\varepsilon_{t/\varepsilon})$ it is also convergent.
The operators $\Delta_H$ and $\Delta$ are intertwined by $\pi$; for $f\dvtx
M\to\R$ smooth,
$(\Delta_H f)\circ\pi=\Delta(f\circ\pi)$. See, for example,
Theorem~4C of Chapter II in Elworthy \cite{Elworthy-SaintFlour} and also
Elworthy, Le Jan and Li \cite{Elworthy-LeJan-Li-book-2};
$\Delta_H$ is cohesive and a horizontal operator in the terminology of
\cite{Elworthy-LeJan-Li-book-2} and is the horizontal lift
of $\Delta$.
We see that\vspace*{-1pt} $(x^\varepsilon_{t/\varepsilon})$ converges to a process
with generator
${4\over(n-1)n}\Delta$ where $\Delta$ is the Laplacian on the
Riemannian manifold $M$. We have completed the proof of Theorem~\ref{thm1}.
\end{pf*}

\section*{Acknowledgements} It is a pleasure to thank D.~Bakry,
K.~D. Elworthy, M.~Hairer, M.~Ledoux,
Y.~Maeda, J.~Norris and S.~Rosenberg for helpful discussions. I would
also like to thank the referees
for helpful comments.

%
%





\printaddresses
\end{document}